\documentclass[12pt]{amsart}
\usepackage[utf8]{inputenc}
\usepackage[T1]{fontenc}
\usepackage{lmodern}
\usepackage{color,graphicx,array, amssymb, amscd,slashed,bm}
\usepackage{comment}
\usepackage{hyperref}
\usepackage{graphicx}
\usepackage{subcaption}
\newtheorem{Theorem}{Theorem}[section]
\newtheorem{Lemma}[Theorem]{Lemma}
\newtheorem{Proposition}[Theorem]{Proposition}
\newtheorem{Corollary}[Theorem]{Corollary}
\theoremstyle{definition}
\newtheorem{Definition}[Theorem]{Definition}

\newtheorem{Remark}[Theorem]{Remark} 

\numberwithin{equation}{section}
\newcommand{\R}{\mathbb R}

\newcommand{\C}{\mathbb C}

\newcommand{\Uone}{{\rm U}_1}
\newcommand{\id}{\operatorname{Id}}

\newcommand{\la}{\langle}
\newcommand{\ra}{\rangle}

\newcommand{\SL}{{\rm SL}(2, \mathbb C)}

\newcommand{\SU}{{ \rm SU(1,1)}}
\newcommand{\SO}{{ \rm SO_{0}(2,2)}}
\newcommand{\U}{{ \rm U(1)}}

\newcommand{\di}{\operatorname{diag}}

\renewcommand{\Re}{\operatorname {Re}}
\renewcommand{\Im}{\operatorname {Im}}

\newcommand{\red}[1]{{\leavevmode\color{red}{#1}}} 

\renewcommand{\red}[1]{#1} 
\usepackage{amsmath}	
\usepackage{cases}
\begin{document}

\title[]{Minimal Lagrangian surfaces in 
the two-dimensional complex hyperbolic quadric via the 
loop group method}
 \author[S.-P.~Kobayashi]{Shimpei Kobayashi}
 \address{Department of Mathematics, Hokkaido University, 
 Sapporo, 060-0810, Japan}
 \email{shimpei@math.sci.hokudai.ac.jp}
 \thanks{The first named author is partially supported by JSPS 
 KAKENHI Grant Number JP22K03304.}
 
 \author[S.~Zeng]{Sihao Zeng}
 \address{Department of Mathematics, Hokkaido University, 
 Sapporo, 060-0810, Japan}
 \email{sihao.zeng.f1@elms.hokudai.ac.jp}
 \thanks{The second named author is supported by Special Program: International Graduate Course for Data-driven and Hypothesis-driven Science (IGC-DHS). Corresponding author.}

 \subjclass[2020]{Primary 53C42; Secondary 53D12}
 \keywords{Minimal surfaces; Lagrangian surfaces;
 complex hyperbolic quadric; flat connections; loop groups}
 \date{\today}
\pagestyle{plain}
\begin{abstract}
We study minimal Lagrangian surfaces in the complex hyperbolic quadric.
We show that minimality of a Lagrangian surface is characterized by a loop of flat connections, which yields
an associated $\mathbb S^1$-family of isometric deformations. 
We also establish a correspondence with spacelike maximal surfaces in anti-de Sitter
$3$-space via the Gauss map.
Using the resulting harmonic map into the hyperbolic two-space, we develop a DPW-type
representation and construct explicit examples, including $\mathbb{R}$-equivariant and
radially symmetric surfaces.
In particular, under suitable conditions, the $\mathbb{R}$-equivariant family contains 
catenoid-type examples.
\end{abstract}

\maketitle

\section*{Introduction}
The complex hyperbolic quadric $Q_2^*$ in the complex anti-de Sitter $3$-space $\mathbb{CH}^3_1$ provides a natural non-compact setting for the study of Lagrangian surface geometry. Besides its role as the target of the Gauss map of a spacelike maximal surface in anti-de Sitter $3$-space $\mathbb H_1^3$ \cite{AiyamaAkutagawaWan2000, KrasnovSchlenker2007},  it carries a natural intrinsic theory of minimal Lagrangian surfaces
\cite{GVWX}.  The purpose of this paper is to show that minimal Lagrangian surfaces in $Q_2^*$ admit a natural integrable-systems description, including a loop of flat connections, an associated $\mathbb S^1$-family of isometric deformations, and a DPW-type representation.

Our first main result is a flatness characterization of minimality of a Lagrangian surface. More precisely, for a Lagrangian surface in $Q_2^*$, we construct a natural family of connections $\mathrm{d}+\omega^\lambda$ depending on a loop parameter $\lambda \in \mathbb S^1$, and prove that the surface is minimal if and only if the family $\mathrm{d}+\omega^\lambda$ is flat for all $\lambda \in \mathbb S^1$. This places minimal Lagrangian surface theory in $Q_2^*$ into the standard framework of integrable surface geometry and yields, in particular, an associated $\mathbb S^1$-family of isometric minimal Lagrangian deformations.

A second main result is the correspondence with spacelike maximal surfaces in anti-de Sitter space $\mathbb H_1^3$ and a DPW-type representation. We establish a correspondence between minimal Lagrangian surfaces
in \(Q_2^*\) and spacelike maximal surfaces in \(\mathbb H^3_1\), up to the natural
ambiguities of the lift and the normal.  Through the Lie group isomorphism between the indefinite Lie groups,
the flatness equations reduce to the harmonic map equation into the hyperbolic plane $\mathbb H^2$. This reduction leads naturally to a DPW-type representation \cite{DPW} for minimal Lagrangian surfaces in $Q_2^*$.
As an application of this representation, we construct several explicit examples, including $\mathbb R$-equivariant and radially symmetric minimal Lagrangian surfaces. In particular, under suitable closing conditions, the $\mathbb R$-equivariant family produces \emph{catenoid-type} examples, giving a natural explicit family of nontrivial annulus-type minimal Lagrangian surfaces in $Q_2^*$.

The present paper should be viewed as the non-compact counterpart of our earlier work on minimal Lagrangian surfaces in the complex quadric $Q_2$ \cite{KZ2025}, but the transition from
\[
Q_2 \cong \mathbb S^2 \times \mathbb S^2
\qquad\text{to}\qquad
Q_2^* \cong \mathbb H^2 \times \mathbb H^2
\]
is far from formal. In the non-compact case, the ambient geometry naturally brings in a Lorentzian companion space, namely anti-de Sitter $3$-space, and the relevant companion surfaces are spacelike maximal. At the same time, the reduced equations acquire a genuinely different sign structure. As a result, the integrable-systems picture in $Q_2^*$ is not a trivial modification of the compact case, but requires a separate treatment.

The paper is organized as follows.
In Section~\ref{sc:1} we develop the surface theory of Lagrangian immersions in $Q_2^*$ and
establish the flatness characterization and the correspondence with spacelike maximal surfaces
in anti-de Sitter $3$-space.
In Section~\ref{sc:DPW} we reformulate the theory in terms of harmonic maps into $\mathbb H^2$ and develop the
DPW representation.
Section~\ref{sc:Ex} is devoted to explicit examples. In Appendix \ref{app: CU}, we compare
our formulation with that of \cite{GVWX}.

\section{Minimal Lagrangian surfaces in $Q_2^{*}$, the family of flat connections, and spacelike maximal surfaces in $\mathbb{H}^{3}_{1}$}\label{sc:1}
In this section, we study the minimality of a Lagrangian immersion $f : M \to Q_{2}^{*}$, which is equivalent to the flatness of an associated family of connections  (Theorem \ref{Thm1}), and we show the existence of a one-parameter family of minimal Lagrangian surfaces (Theorem \ref{thm2}). Moreover, Theorem~\ref{Thm corresponding} establishes a correspondence between minimal Lagrangian surfaces in $Q_{2}^{*}$ and spacelike maximal surfaces in $\mathbb{H}^{3}_{1}$.

\subsection{Lagrangian surfaces in $Q_2^{*}$}\label{sub: Lagrangian surfaces}
 Let $( N , \omega )$ be a K\"ahler manifold of $\text{dim}_{\mathbb{C}}N = n$ with the 
 K\"ahler form $\omega$. An immersion $f : M \to N$ from an $m$-dimensional manifold $M$ into $N$ is said to be $\textit{totally~real}$, or $\textit{isotropic}$, if $f^{\ast}\omega = 0$. 
 In particular, a totally real immersion $f$ is said to be $\textit{Lagrangian}$ if $m = n$.

 Let $\mathbb{C}^{n}_{m}$ be the complexification of the pseudo-Euclidean space $\mathbb{R}^{n}_{m}$ with the complex bilinear form
$\la \,,\, \ra$ defined by
 \begin{equation}\label{eq:sclar}
     \left \la \boldsymbol{z} , \boldsymbol{w} \right \ra = -z_{1}w_{1}- \cdots - z_{m}w_{m} + z_{m+1}w_{m+1} + \cdots + z_{n}w_{n},
 \end{equation}
 where $\boldsymbol{z} = \left( z_{1}, \ldots, z_{n} \right), \, \boldsymbol{w} = \left( w_{1}, \ldots, w_{n}\right) \in \mathbb{C}^{n}_{m}$.
 The standard Hermitian inner product $(~,~)$ on $\mathbb{C}^{n}_{m}$ is given by $\left( \boldsymbol{z}, \boldsymbol{w} \right) = \left \la \boldsymbol{z} , \bar{\boldsymbol{w}} \right \ra$, where $\bar{\boldsymbol{w}}$ is the complex conjugate of $\boldsymbol{w}$. 
 The complex pseudo-hyperbolic space $\mathbb{C H}^{n}_{m}$ of dimension $n$
 is a space of negative lines in the complex projective space $\mathbb{C }P^{n}$ defined by
 \begin{equation*}
     \mathbb{C H}^{n}_{m} := \left \{ [\boldsymbol{z}] \in \mathbb{C }P^{n} \mid 
     \mbox{$( \boldsymbol{z} , \boldsymbol{z} ) <0$ for $\boldsymbol{z} \in \C^{n+1}_{m+1}$}  \right \}.
 \end{equation*}
 On the one hand, the pseudo-hyperbolic space of constant sectional curvature $c<0$
 \[\mathbb{H}^{2n+1}_{2m+1}(c)  := \{ \boldsymbol{z} \in \mathbb{C}_{m+1}^{n+1} \mid ( 
 \boldsymbol{z},  \boldsymbol{z} ) = 1/c\}
 \] 
 has  a natural indefinite Hopf fibration 
\[\mathbb{H}^{2n+1}_{2m+1}(c) \to   \mathbb{C H}^{n}_{m},\]
 see \cite{anciaux2010minimal}.
 Thus $\mathbb{C H}^{n}_{m}$ can also be realized as  the quotient of 
 by the timelike $\mathbb S^1$-fiber and the holomorphic sectional curvature of $\mathbb{C H}^{n}_{m}$ is $4 c<0$.
 In particular, for $n=1, m = 0$, $\mathbb{H}^{3}_{1}(c)$
 is the anti-de Sitter $3$-space,
 and for $n=3, m = 1$, $ \mathbb{C H}^{3}_{1}$ is the complex anti-de Sitter $3$-space.
 In this paper, we denote by $\mathbb{H}^{3}_{1}$ the anti-de Sitter $3$-space with constant sectional curvature $c = -1$.
  
The complex hyperbolic quadric $Q_2^{*}$ is realized as  
\begin{equation}\label{eq:Q2}
Q_{2}^{*} := \left \{ \left [ \boldsymbol{z} \right ] \in \mathbb{C}\mathbb{H}^{3}_{1} \mid \la \boldsymbol{z}, \boldsymbol{z} \ra = 0 \right \}^{0},
\end{equation}
where the superscript $0$ denotes a connected component. 
It is known that $Q_2^{*}$ is a homogeneous K\"ahler-Einstein manifold and it is isometric to $\mathbb H^2 \times  \mathbb H^2$, 
 where the curvatures of the two hyperbolic planes $\mathbb H^2$ are normalized to $-4$, \cite{GVWX, WV2021}.

 Let $f : M \to Q_{2}^{*}$ be a  conformal Lagrangian immersion from a Riemann surface $M$ into $Q_{2}^{*}$. Thus there exists a simply connected domain $\mathbb{D} \subset M$ with conformal coordinate $z = x + iy$, and the induced metric on $\mathbb{D}$ can be computed as 
 \begin{equation*}
 ds^{2}_{M} = 2e^{u}\text{d}z\text{d}\bar{z}. 
 \end{equation*}
 Let $\mathfrak{f}: \mathbb{D} \to \mathbb{H}^{7}_{3}(-1) \subset \mathbb C^4_{2}$ be a local lift of $f$, i.e. $f = \pi \circ \mathfrak{f}$, where $\pi : \mathbb{H}^{7}_{3}(-1) \to \mathbb{C}\mathbb{H}^{3}_{1}$ is the indefinite Hopf fibration. In fact, the projection $f$ can be realized as $[\mathfrak{f}]$.
 Since $f$ is conformal and Lagrangian, and satisfies $f(M) \subset Q_{2}^{*}$, we obtain
 \begin{equation}\label{eq: conformal and Lagrangian}
 \langle  \mathfrak{f}_{z}, \overline{\mathfrak{f}_{\bar{z}}} \rangle  = 0, \quad \langle \mathfrak{f}_{z}, \overline{\mathfrak{f}_{z}} \rangle = \langle \mathfrak{f}_{\bar{z}}, \overline{\mathfrak{f}_{\bar{z}}} \rangle = e^{u}, \quad \langle  \mathfrak{f}, \mathfrak{f} \rangle  = 0, \quad \langle \mathfrak{f}_{z}, \mathfrak{f}  \rangle = \left\la \mathfrak{f}_{\bar{z}}, \mathfrak{f} \right\ra = 0.
 \end{equation}
 Here $\partial_z = \tfrac12 (\partial_x - i \partial_y)$ and 
 $\partial_{\bar z} = \tfrac12 (\partial_x + i \partial_y)$ are the complex differentiations.
 If a local lift $\mathfrak{f}$ satisfies 
 \begin{equation*}
     \langle \mathfrak{f}_{z} , \bar{\mathfrak{f}} \rangle = \langle \mathfrak{f}_{\bar{z}} , \bar{\mathfrak{f}} \rangle = 0, 
 \end{equation*}
  then we call $\mathfrak{f}$ a $\textit{horizontal~lift}$.
  
 Since horizontal lifts of $f$ are not unique, we fix one horizontal lift $\mathfrak f$ for the time being.
Define the indefinite special orthogonal group
\begin{equation*}
 \mathrm{SO}(2,2) := \left\{  A \in M_{4 \times 4} (\R) \mid  A^T \eta A = \eta,~
 \det A =1\right\}, \quad \eta =  \di(-1,-1,1,1).
\end{equation*}
Since the identity component of $\mathrm{SO}(2,2)$ acts transitively on $Q_2^{*}$ by orientation-preserving isometry, $Q_{2}^{*}$ is isomorphic to the symmetric space: 
 \begin{equation}\label{eq:symQ2}
 Q_2^{*} = \SO/ \left( \rm{SO}(2) \times \rm{SO}(2) \right),
 \end{equation}
  where the subscript $0$ denotes the identity component, see \cite{AA1998,GVWX}. Indeed, by choosing the 
  involution $\sigma = \operatorname{Ad} \operatorname{diag}(1, 1, -1, -1)$ on $\SO$,
  the fixed point set of $\sigma$ is exactly $ \rm{SO}(2) \times \rm{SO}(2)$. 
    Let $f : M \to Q_2^{*}=\SO/ \left( \rm{SO}(2) \times \rm{SO}(2) \right)$ and let $\mathcal{F}: \mathbb{D} \subset M \to \SO$ be a local lift of $f$ as
 \begin{equation}\label{eq: Frame identity component}
     \mathcal{F}:= \left( \frac{1}{\sqrt{2}}\left( \mathfrak{f} + \bar{\mathfrak{f}} \right), -\frac{i}{\sqrt{2}}\left( \mathfrak{f} - \bar{\mathfrak{f}} \right), \frac{\mathfrak{f}_{z} + \overline{\mathfrak{f}_{z}}}{\sqrt{2e^{u} + \alpha + \bar{\alpha}}}, -\frac{i\left \{\mathfrak{f}_{z}\left( e^{u} + \bar{\alpha} \right) - \overline{\mathfrak{f}_{z}}\left( e^{u} + \alpha \right) \right \} }{\sqrt{\left( 2e^{u} + \alpha + \bar{\alpha} \right)\left( e^{2u} - \alpha\bar{\alpha} \right)}} \right),
 \end{equation}
 such that $\mathcal{F}( z_{0}) = \mathrm{Id}$, where $\mathfrak{f}$ is a horizontal lift defined above and 
 \begin{equation}\label{eq:alpha}
 \alpha := \la \mathfrak{f}_{z}, \mathfrak{f}_{z} \ra. 
 \end{equation}
 By direct computation $e^{2u} - \alpha\bar{\alpha} \geq  0$ and 
 we assume that $e^{2u} - \alpha\bar{\alpha} > 0$ for the time being so that 
 the frame $\mathcal F$ is well-defined.
 Note that this assumption is not necessary for minimal Lagrangian surfaces, as we will see later,
 see Remark \ref{Rm:assumption}.
 Its Maurer-Cartan form can be computed as follows:
 \begin{equation}
     \omega = \mathcal{F}^{-1}\text{d}\mathcal{F} =  \mathcal{F}^{-1}\mathcal{F}_{z}\text{d}z + \mathcal{F}^{-1}\mathcal{F}_{\bar{z}}\text{d}\bar{z} = \mathcal{U}\text{d}z + \mathcal{V}\text{d}\bar{z}, 
 \end{equation}
 where \begin{equation}\label{eq:pq}
     \mathcal{U} = \begin{pmatrix}
 0 & 0 & -p_{1} & -p_{2} \\
 0 & 0 & -p_{3} & -p_{4} \\
 -p_{1} & -p_{3} & 0 & q \\
 -p_{2} & -p_{4} & -q & 0
\end{pmatrix}, \quad \mathcal{V} = \begin{pmatrix}
 0 & 0 & -\bar{p}_{1} & -\bar{p}_{2} \\
 0 & 0 & -\bar{p}_{3} & -\bar{p}_{4} \\
 -\bar{p}_{1} & -\bar{p}_{3} & 0 & \bar{q} \\
 -\bar{p}_{2} & -\bar{p}_{4} & -\bar{q} & 0
\end{pmatrix}, 
 \end{equation}
 with
 \begin{equation*}
 \begin{aligned}
		&p_{1} =  -\frac{1}{\sqrt{2}} \left( \frac{\alpha + e^{u} + \bar{\beta}}{\sqrt{2e^{u} + \alpha + \bar{\alpha}}} \right), \quad
		p_{2} = \frac{i}{\sqrt{2}} \left[ \frac{\left( \alpha\bar{\alpha} - e^{2u} \right) - \bar{\beta}\left( e^{u} + \alpha \right)}{\sqrt{\left( 2e^{u} + \alpha + \bar{\alpha} \right)\left( e^{2u} - \alpha\bar{\alpha} \right) }} \right], \\
        &p_{3} = \frac{i}{\sqrt{2}} \left( \frac{\alpha + e^{u} - \bar{\beta}}{\sqrt{2e^{u} + \alpha + \bar{\alpha}}} \right), \quad
        p_{4} = \frac{1}{\sqrt{2}} \left[ \frac{\left( \alpha\bar{\alpha} - e^{2u} \right) + \bar{\beta}\left( e^{u} + \alpha \right)}{\sqrt{\left( 2e^{u} + \alpha + \bar{\alpha} \right)\left( e^{2u} - \alpha\bar{\alpha} \right) }} \right], \\
        &q = i \left[ \frac{\frac{1}{2}e^{u}\left( \alpha_{z} - \bar{\alpha}_{z} \right) + \frac{1}{2}\left( \alpha_{z}\bar{\alpha} - \bar{\alpha}_{z}\alpha \right) - e^{u}\phi\left( 2e^{u} + \bar{\alpha} + \alpha \right) - u_{z}e^{u}\left( e^{u} + \alpha \right)}{\left( 2e^{u} + \alpha + \bar{\alpha} \right)\sqrt{e^{2u} - \alpha\bar{\alpha}}} \right], 
 \end{aligned}
\end{equation*}
  and 
  \begin{equation}\label{complex function}
\beta := \la \mathfrak{f}_{z}, \mathfrak{f}_{\bar{z}} \ra, \quad \phi := e^{-u} \la \mathfrak{f}_{z\bar{z}}, \overline{\mathfrak{f}_{\bar{z}}}\ra.
  \end{equation}
Note that $\alpha \, \text{d}z^2$, $\beta  \, \text{d}z \text{d}\bar z$ and $\phi \, \text{d}z$
 are well-defined differentials for the horizontal lift $\mathfrak f$.  
 Moreover, we can characterize the minimality of the surface by the following theorem.
\begin{Theorem}\label{Thm: minimality}
    Let $f : M \to Q_{2}^{*}$ be a conformal Lagrangian immersion and $\Phi$ be the associated one-form defined by $\Phi = \phi \mathrm{d} z$. Then $f$ is minimal if and only if $\Phi = 0$.
\end{Theorem}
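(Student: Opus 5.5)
The idea is to compute the mean curvature vector of $f$ through the horizontal lift $\mathfrak f$ and identify its only nontrivial component with $\phi$. Since $\mathfrak f$ is horizontal, $\mathrm{d}\pi$ at $\mathfrak f$ identifies the horizontal space
\[
\mathcal H_{\mathfrak f}=\{\boldsymbol v\in\C^4_2 \mid (\boldsymbol v,\mathfrak f)=0\}
\]
isometrically with $T_{f}\mathbb{CH}^3_1$. Under this identification the Levi-Civita connection of $\mathbb{CH}^3_1$ is the horizontal projection of the flat derivative in $\C^4_2$. Hence the mean curvature vector $H$ of $f$ in $\mathbb{CH}^3_1$ corresponds to the component of $\mathfrak f_{z\bar z}$ lying in $\mathcal H_{\mathfrak f}$ and normal to $T f(M)$ in $T Q_2^*$, up to the positive factor $2e^{-u}$. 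Since $Q_2^*$ is a complex submanifold of $\mathbb{CH}^3_1$, it is minimal there. Therefore $f$ is minimal in $Q_2^*$ if and only if the component of $\mathfrak f_{z\bar z}$ tangent to $Q_2^*$ and normal to $f$ vanishes.

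The next step is to set up a frame adapted to $f$. The horizontal tangent space of $Q_2^*$ at $[\mathfrak f]$ is
\[
\{\boldsymbol v \mid (\boldsymbol v,\mathfrak f)=0,\ \la \boldsymbol v,\mathfrak f\ra=0\},
\]
which has complex dimension $2$. By \eqref{eq: conformal and Lagrangian} and horizontality, both $\mathfrak f_z$ and $\mathfrak f_{\bar z}$ lie in this space. The vectors $\mathfrak f_z$ and $\overline{\mathfrak f_{\bar z}}$ are Hermitian-orthogonal, with $(\mathfrak f_z,\mathfrak f_z)=(\mathfrak f_{\bar z},\mathfrak f_{\bar z})=e^u$. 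They therefore form a complex basis of this $2$-plane, and its real part is spanned by $f_x,f_y$. Because $f$ is Lagrangian, $J$ maps $Tf(M)$ onto the normal bundle of $f$ in $Q_2^*$, and $J$ corresponds to multiplication by $i$. Decomposing a tangent vector $\boldsymbol v=a\,\mathfrak f_z+b\,\overline{\mathfrak f_{\bar z}}$, its Hermitian products against $\mathfrak f_z$ and $\overline{\mathfrak f_{\bar z}}$ recover $a e^u$ and $b e^u$. The tangency/normality of a real vector is then decided by the reality conditions of these coefficients relative to the basis $\mathfrak f_x,\mathfrak f_y$.

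Next I will compute the coefficients of $\mathfrak f_{z\bar z}$. The quantity $(\mathfrak f_{z\bar z},\mathfrak f_z)$ is obtained by differentiating $(\mathfrak f_z,\mathfrak f_z)=e^u$ and $(\mathfrak f_z,\mathfrak f_{\bar z})=0$. Combined with the definition of $\phi$ in \eqref{complex function}, this gives $\mathfrak f_{z\bar z}=(\text{real multiple of } \mathfrak f\text{-directions})+e^{-u}\overline{(\ldots)}\,\mathfrak f_z+\phi\,\overline{\mathfrak f_{\bar z}}$, with the coefficient in the $\mathfrak f_z$ direction expressible through $\bar\phi$ or its conjugate. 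Here I use the symmetry $\mathfrak f_{z\bar z}=\mathfrak f_{\bar z z}$ and the fact that $\mathfrak f_{z\bar z}$ has zero Hermitian product with $\mathfrak f$ after projection. The tangential part of $f_{xx}+f_{yy}=4\mathfrak f_{z\bar z}$ must vanish by the structure equations, since the harmonic part along the surface is zero in conformal coordinates. The remaining part is then $J$ applied to a real tangent vector whose complex coordinates are $i\phi$ and its conjugate. So $H=0$ if and only if $\phi=0$.

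The main obstacle is the bookkeeping in the last step. One has to check carefully that the components of $\mathfrak f_{z\bar z}$ along $\mathfrak f$ and $\bar{\mathfrak f}$ are purely vertical or normal to $Q_2^*$, so that they do not contribute. One also has to check that the in-$Q_2^*$ normal component is exactly determined by $\phi$, with no extra terms involving $\alpha$ or $\beta$. I would first verify this by expanding $\mathfrak f_{z\bar z}$ in the Hermitian dual basis $\{\mathfrak f,\bar{\mathfrak f},\mathfrak f_z,\overline{\mathfrak f_{\bar z}}\}$ and using \eqref{eq: conformal and Lagrangian} differentiated once.
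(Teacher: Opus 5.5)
\textbf{Same approach as the paper, but the frame is wrong.} Your strategy matches the paper's. You work with the horizontal lift, expand $\mathfrak f_{z\bar z}$ in a unitary frame of $\C^4_2$ adapted to $f$, discard the $\mathfrak f$- and $\bar{\mathfrak f}$-components, and identify the part tangent to $Q_2^*$ and normal to $f$ with $\phi$. The frame you build is wrong, and the key step fails there.

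The relation $\langle \mathfrak f_z,\overline{\mathfrak f_{\bar z}}\rangle=0$ in \eqref{eq: conformal and Lagrangian} concerns the \emph{bilinear} form, so it says $(\mathfrak f_z,\mathfrak f_{\bar z})=0$. The Hermitian product you actually use is $(\mathfrak f_z,\overline{\mathfrak f_{\bar z}})=\langle \mathfrak f_z,\mathfrak f_{\bar z}\rangle=\beta$, which is nonzero in general. Expanding in the genuinely orthogonal basis gives $\overline{\mathfrak f_{\bar z}}=e^{-u}\bigl(\bar\beta\,\mathfrak f_z+\bar\gamma\,\mathfrak f_{\bar z}\bigr)$, where $\gamma=\langle\mathfrak f_{\bar z},\mathfrak f_{\bar z}\rangle$ and $|\gamma|^2=e^{2u}-|\beta|^2=|\alpha|^2$ by \eqref{eq: condition 1}. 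So $\{\mathfrak f_z,\overline{\mathfrak f_{\bar z}}\}$ is not even a basis wherever $\alpha=0$, for instance everywhere on the diagonal surface. This has three consequences:
\begin{itemize}
\item recovering the coefficients $ae^u,be^u$ by Hermitian products fails;
\item the expansion $\mathfrak f_{z\bar z}=\cdots+\phi\,\overline{\mathfrak f_{\bar z}}$ is false, since by definition $\phi=e^{-u}(\mathfrak f_{z\bar z},\mathfrak f_{\bar z})$ is the coefficient along $\mathfrak f_{\bar z}$;
\item the extra $\alpha$- and $\beta$-terms you anticipate in your last paragraph are artifacts of this non-orthogonal basis.
\end{itemize}

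\textbf{The repair.} Replace $\overline{\mathfrak f_{\bar z}}$ by $\mathfrak f_{\bar z}$, which is the paper's $e_3$, and the argument closes in a few lines. The vectors $\mathfrak f,\bar{\mathfrak f},\mathfrak f_z,\mathfrak f_{\bar z}$ are Hermitian-orthogonal, with $(\mathfrak f,\mathfrak f)=(\bar{\mathfrak f},\bar{\mathfrak f})=-1$ and $(\mathfrak f_z,\mathfrak f_z)=(\mathfrak f_{\bar z},\mathfrak f_{\bar z})=e^u$. Differentiating $(\mathfrak f_{\bar z},\mathfrak f)=0$, $\langle\mathfrak f_{\bar z},\mathfrak f\rangle=0$ and $(\mathfrak f_{\bar z},\mathfrak f_z)=0$ in $z$ gives $(\mathfrak f_{z\bar z},\mathfrak f)=-e^u$, $(\mathfrak f_{z\bar z},\bar{\mathfrak f})=-\beta$ and $(\mathfrak f_{z\bar z},\mathfrak f_z)=-e^u\bar\phi$. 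Hence
\[
\mathfrak f_{z\bar z}=e^u\,\mathfrak f+\beta\,\bar{\mathfrak f}-\bar\phi\,\mathfrak f_z+\phi\,\mathfrak f_{\bar z}.
\]
As a check, pairing this with $\mathfrak f_z$ via $\langle\,,\rangle$ reproduces \eqref{eq: condition 3}.

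The tangent plane of $f$ is $\{w\mathfrak f_z+\bar w\mathfrak f_{\bar z} : w\in\C\}$, and its normal inside $TQ_2^*$ is $i$ times this plane. So the $TQ_2^*$-part $-\bar\phi\,\mathfrak f_z+\phi\,\mathfrak f_{\bar z}$ has vanishing tangential part, checked explicitly rather than by the conformality principle. It equals $J$ applied to the tangent vector with $w=i\bar\phi$, so $H=0$ if and only if $\phi=0$.

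\textbf{Two smaller points.}
\begin{itemize}
\item Passing from $\mathbb{CH}^3_1$ to the $TQ_2^*$-component is just the Gauss formula for $f(M)\subset Q_2^*\subset\mathbb{CH}^3_1$; minimality of $Q_2^*$ plays no role. Indeed the $Q_2^*$-normal part $\beta\,\bar{\mathfrak f}$ is generally nonzero, because the trace of the second fundamental form of $Q_2^*$ over a Lagrangian plane need not vanish.
\item $Tf(M)$ is the real span of $\mathfrak f_x,\mathfrak f_y$, not the ``real part'' of the complex $2$-plane.
\end{itemize}
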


\begin{proof}
       Choose an indefinite unitary $\mathrm{U}(2,2)$-frame of $\mathbb{C}^{4}_{2}$ as follows: 
    \begin{equation*}
        e_{0} = \mathfrak{f}, \quad e_{1} = \bar{\mathfrak{f}}, \quad e_{2} = e^{-u/2}\mathfrak{f}_{z}, \quad e_{3} = e^{-u/2}\mathfrak{f}_{\bar{z}}, 
    \end{equation*}
    where $\mathfrak{f}$ is the horizontal lift of $f$ and is defined in Subsection \ref{sub: Lagrangian surfaces}. The condition for $f$ to be minimal is the vanishing of the traces of second fundamental forms \cite{CW1983}, and the calculations are straightforward.
    Thus the result follows by direct computation  similar to the proof of \cite[Theorem 3.2]{WX}.
\end{proof}

The compatibility condition $\mathcal{F}_{z\bar{z}} = \mathcal{F}_{\bar{z}z}$ is equivalent to $\mathcal{U}_{\bar{z}} - \mathcal{V}_{z} = \left[ \mathcal{U}, \mathcal{V} \right]$, as well as to the Maurer-Cartan equation $\text{d}\omega + \frac{1}{2}\left[ \omega \wedge \omega \right] = 0$, which can also be interpreted as the flatness of the connection $\text{d} + \omega$. More explicitly, it reduces to the following system of equations: 
 \begin{subnumcases}{}
	&$p_{1}\bar{p}_{3} + p_{2}\bar{p}_{4} = \bar{p}_{1}p_{3} + \bar{p}_{2}p_{4}$, \label{eq:M-C1}\\
	&$\bar{p}_{2}q - p_{2}\bar{q} = p_{1\bar{z}} - \bar{p}_{1z}$, \label{eq:M-C2}\\
	&$p_{1}\bar{q} - \bar{p}_{1}q = p_{2\bar{z}} - \bar{p}_{2z}$, \label{eq:M-C3}\\
        &$\bar{p}_{4}q - p_{4}\bar{q} = p_{3\bar{z}} - \bar{p}_{3z}$, \label{eq:M-C4}\\
        &$p_{3}\bar{q} - \bar{p}_{3}q = p_{4\bar{z}} - \bar{p}_{4z}$, \label{eq:M-C5}\\
        &$p_{2}\bar{p}_{1} + p_{4}\bar{p}_{3} - \bar{p}_{2}p_{1} - \bar{p}_{4}p_{3} = -q_{\bar{z}} + \bar{q}_{z}$.  \label{eq:M-C6}
\end{subnumcases}
By direct computation, \eqref{eq:M-C1}-\eqref{eq:M-C5} can be simplified to the following system of equations:
\begin{numcases}
{}
	&$e^{2u} - \beta\bar{\beta} = \alpha \bar{\alpha}$, \label{eq: condition 1}\\
	&$\frac{1}{2}\bar{\alpha}_{\bar{z}}\alpha - e^{2u}\bar{\phi} - u_{\bar{z}}e^{2u} + \beta \bar{\beta}_{\bar{z}} = \frac{1}{2} \bar{\alpha}_{z}\beta$, \label{eq: condition 2}\\
	&$\phi \beta = \bar{\phi} \alpha + \frac{1}{2} \alpha_{\bar{z}}$. \label{eq: condition 3}
\end{numcases}
 Note that \eqref{eq:M-C6} can also be expressed by the function $\alpha$, $\beta$ and 
 $\phi$, however, the expression is rather involved and will not be used later, 
 so we omit it here.
 
\subsection{Minimality of Lagrangian surfaces and the family of flat connections}\label{sbsc: Minimality}
Since $Q_2^*$ is a symmetric space as in \eqref{eq:symQ2}, minimal surfaces in $Q_2^*$ may be viewed as conformal harmonic maps, and hence the integrable-systems approach applies.
We consider the following family of connection one-forms $\text{d} + \omega^{\lambda}$:
 \begin{equation}\label{lambda family}
     \omega^{\lambda} = \lambda^{-1}\omega^{'}_{\mathfrak{p}} + \omega_{\mathfrak{k}} + \lambda\omega^{''}_{\mathfrak{p}}, \quad 
     (\lambda \in \mathbb{S}^{1}),
 \end{equation}
where $\mathfrak{g}  = \operatorname{Lie}\left( \SO \right)= \mathfrak{so}(2,2)$ admits the decomposition  $\mathfrak{g} = \mathfrak{k} \oplus \mathfrak{p}$
with the fixed point subalgebra $\mathfrak{k} = \operatorname{Fix} (d \sigma)=  \mathfrak{so}(2) \times \mathfrak{so}(2)$ and its complement $\mathfrak p$, and $\omega_{\mathfrak{k}}$ and $\omega_{\mathfrak{p}}$ are the $\mathfrak{k}$- and the $\mathfrak{p}$-valued 1-forms.
Moreover $\prime$ and $\prime \prime$ denote the $(1,0)$- and the $(0,1)$-parts, respectively. While the flatness of $\text{d} + \omega$ corresponds to the flatness of $\left( \text{d} + \omega^{\lambda} \right)|_{\lambda = 1}$, requiring $\mathrm{d} + \omega^{\lambda}$ to be flat for all $\lambda \in \mathbb{S}^{1}$ imposes an additional condition of harmonicity on the Lagrangian surface $f$.  
More explicitly, \eqref{lambda family} can be written as follows:
\begin{equation*}
    \omega^{\lambda} = \mathcal{U}^{\lambda}\text{d}z + \mathcal{V}^{\lambda}\text{d}\bar{z},
\end{equation*}
where \begin{equation*}
     \mathcal{U}^{\lambda} = \begin{pmatrix}
 0 & 0 & -\lambda^{-1}p_{1} & -\lambda^{-1}p_{2} \\
 0 & 0 & -\lambda^{-1}p_{3} & -\lambda^{-1}p_{4} \\
 -\lambda^{-1}p_{1} & -\lambda^{-1}p_{3} & 0 & q \\
 -\lambda^{-1}p_{2} & -\lambda^{-1}p_{4} & -q & 0
\end{pmatrix}, \quad \mathcal{V}^{\lambda} = \begin{pmatrix}
 0 & 0 & -\lambda\bar{p}_{1} & -\lambda\bar{p}_{2} \\
 0 & 0 & -\lambda\bar{p}_{3} & -\lambda\bar{p}_{4} \\
 -\lambda\bar{p}_{1} & -\lambda\bar{p}_{3} & 0 & \bar{q} \\
 -\lambda\bar{p}_{2} & -\lambda\bar{p}_{4} & -\bar{q} & 0
\end{pmatrix}.
 \end{equation*}
The condition $\text{d}\omega^{\lambda} + \frac{1}{2}[ \omega^{\lambda} \wedge \omega^{\lambda} ] = 0$ is now equivalent to $\mathcal{U}^{\lambda}_{\bar{z}} - \mathcal{V}^{\lambda}_{z} = [ \mathcal{U}^{\lambda} , \mathcal{V}^{\lambda} ]$, and it is equivalent to the following system of equations:
\begin{subnumcases}{}
	&$p_{1}\bar{p}_{3} + p_{2}\bar{p}_{4} = \bar{p}_{1}p_{3} + \bar{p}_{2}p_{4}$, \label{eq:l M-C1}\\
	&$\lambda\bar{p}_{2}q-\lambda^{-1}p_{2}\bar{q} = \lambda^{-1}p_{1\bar{z}} - \lambda\bar{p}_{1z}$, \label{eq:l M-C2}\\
	&$\lambda^{-1}p_{1}\bar{q} - \lambda\bar{p}_{1}q = \lambda^{-1}p_{2\bar{z}} -\lambda\bar{p}_{2z}$, \label{eq:l M-C3}\\
        &$\lambda\bar{p}_{4}q - \lambda^{-1}p_{4}\bar{q} = \lambda^{-1}p_{3\bar{z}} - 
    \lambda\bar{p}_{3z}$, \label{eq:l M-C4}\\
        &$\lambda^{-1}p_{3}\bar{q} - \lambda\bar{p}_{3}q = \lambda^{-1}p_{4\bar{z}} -\lambda\bar{p}_{4z}$, \label{eq:l M-C5}\\
        &$p_{2}\bar{p}_{1} + p_{4}\bar{p}_{3} - \bar{p}_{2}p_{1} - \bar{p}_{4}p_{3} = -q_{\bar{z}} + \bar{q}_{z}$.  \label{eq:l M-C6}
\end{subnumcases}
By direct computation, \eqref{eq:l M-C1}-\eqref{eq:l M-C6} can be simplified to the following system of equations:
\begin{numcases}
{}
	&$e^{2u} - \beta\bar{\beta} = \alpha \bar{\alpha}$ \label{eq:1 condition 1}, \\
	&$\alpha_{\bar{z}} = \bar{\alpha}_{z} = \phi = 0$, \label{eq:1 condition 2}\\
	&$\frac{1}{2}\bar{\alpha}\alpha_{z} + \bar{\beta}\beta_{z} - u_{z}e^{2u} = 0$, \label{eq:1 condition 3}\\
        &$u_{z\bar{z}}e^{u}|\beta|^{2} - \frac{1}{4}|\alpha_{z}|^{2} e^{u} - |u_{z}|^{2} e^{u} |\alpha|^{2}  + \frac{1}{2} \alpha_{z} u_{\bar{z}} e^{u} \bar{\alpha} + \frac{1}{2}\bar{\alpha}_{\bar{z}} u_{z} e^{u} \alpha - 2|\beta|^{4} = 0$. \label{eq: second order PDE}
\end{numcases}
In particular, equation \eqref{eq: second order PDE} follows from equation \eqref{eq:l M-C6} after simplification.
Without loss of generality, we assume that $\alpha$ and $\beta$ are not identically zero by the following Lemma: 
\begin{Lemma}\label{lem:totally}
When $\alpha \equiv 0$ (resp. $\beta \equiv 0$), then the minimal Lagrangian surface in $Q_{2}^{*}$ is an open part of the diagonal surface (resp. a product of geodesics). 
\end{Lemma}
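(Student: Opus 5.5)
We work with the ODE system for the horizontal lift $\mathfrak f$ in the $\mathrm U(2,2)$-frame $e_0=\mathfrak f$, $e_1=\bar{\mathfrak f}$, $e_2=e^{-u/2}\mathfrak f_z$, $e_3=e^{-u/2}\mathfrak f_{\bar z}$ from the proof of Theorem \ref{Thm: minimality}. By minimality and \eqref{eq:1 condition 2}, $\phi=0$ and $\alpha$ is holomorphic. We express $\mathfrak f_{zz}$, $\mathfrak f_{z\bar z}$, $\bar{\mathfrak f}_z$ in this frame, with coefficients depending only on $u,\alpha,\beta$, and then specialise each case.

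\emph{Case $\alpha\equiv 0$.} By \eqref{eq:1 condition 1}, $|\beta|=e^{u}$, and \eqref{eq:1 condition 3} becomes $\bar\beta\beta_z=u_ze^{2u}$. This gives $\partial_z(|\beta|^2)=\bar\beta\beta_z+\beta\bar\beta_z=2u_ze^{2u}$, so $\beta\bar\beta_z=u_ze^{2u}$ as well. Hence $\beta=e^{u+i\theta}$ with $\theta$ constant. After multiplying $\mathfrak f$ by a constant phase, which preserves horizontality, we may take $\theta\in\{0,\pi\}$. The conditions $\langle\mathfrak f_z,\mathfrak f_z\rangle=0$ and $\langle\mathfrak f_z,\mathfrak f_{\bar z}\rangle=\pm e^{u}=\pm\langle\mathfrak f_z,\overline{\mathfrak f_z}\rangle$ then imply that $\mathfrak f_{\bar z}\mp\overline{\mathfrak f_z}$ is orthogonal, for both $\la\,,\ra$ and $(\,,)$, to the whole frame. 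It therefore vanishes, so $\mathfrak f_{\bar z}=\pm\overline{\mathfrak f_z}$. It follows that $\mathfrak f\mp\bar{\mathfrak f}$ is locally constant, so $\mathfrak f=a+ib(z)$ (or $b+ia$) with $a$ a constant real vector.

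Using $\langle\mathfrak f,\mathfrak f\rangle=0$ and $(\mathfrak f,\mathfrak f)=-1$, we get $\langle a,a\rangle=\langle b,b\rangle=-1/2$ and $\langle a,b\rangle=0$. Thus $b$ lies in a copy of $\mathbb H^2$ inside $a^\perp\cong\mathbb R^3_1$. Under the identification $Q_2^*\cong\mathbb H^2\times\mathbb H^2$, realised via oriented negative $2$-planes, this is exactly the diagonal $\{(x,x)\}$ up to an isometry. The main obstacle here is matching the embedding \eqref{eq:Q2} with the product structure so that the orbit really is the diagonal. We would verify this by writing $[a+ib]\mapsto$ (self-dual part, anti-self-dual part) of $a\wedge b$ and checking that fixing $a$ makes the two factors coincide after an isometry of one factor.

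\emph{Case $\beta\equiv0$.} By \eqref{eq:1 condition 1}, $|\alpha|=e^u$, and \eqref{eq:1 condition 3} gives $\bar\alpha\alpha_z=2u_ze^{2u}=\partial_z|\alpha|^2=\bar\alpha\alpha_z$, which is consistent. Since $\alpha$ is holomorphic and nonvanishing, a conformal change of $z$ gives $\alpha\equiv1$, hence $u\equiv0$. The metric is then flat, and \eqref{eq: second order PDE} holds trivially. We now compute the structure equations: $\mathfrak f_{z\bar z}$ has no $e_0,e_1$-components beyond those forced by $(\mathfrak f_z,\mathfrak f_{z})=1$, and $\phi=0$ together with $\beta=0$ kills the tangential part. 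This yields a linear constant-coefficient system $\mathfrak f_{z\bar z}=c\,\mathfrak f$ and $\mathfrak f_{zz}=\mathfrak f_{z\bar z}$-type relations.

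Solving this system in real coordinates $x,y$ gives $\mathfrak f=e^{x A+yB}\mathfrak f(0)$ with commuting $A,B$ generating two one-parameter subgroups, one acting in each $\mathrm{SO}_0(2,1)$ factor. The image is therefore $\gamma_1\times\gamma_2$ with $\gamma_j$ geodesics of $\mathbb H^2$. To identify these orbits as geodesics, we would again use the self-dual/anti-self-dual splitting of $\mathfrak{so}(2,2)\cong\mathfrak{so}(2,1)\oplus\mathfrak{so}(2,1)$ and check that $A\pm B$ are hyperbolic elements lying in $\mathfrak p$. Since minimal Lagrangian surfaces are real analytic, the local conclusions in both cases extend, so the surface is an open part of the diagonal, respectively of a product of geodesics.
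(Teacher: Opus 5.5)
Your proof is correct in outline, but it takes a genuinely different route from the paper. The paper integrates nothing. It inserts $\alpha\equiv0$ into \eqref{eq:1 condition 1} and \eqref{eq: second order PDE}, which gives $|\beta|=e^{u}$ and $u_{z\bar z}=2e^{u}$, i.e.\ $K\equiv-2$. For $\beta\equiv0$ it gets $|\alpha|=e^{u}$ with $\alpha$ holomorphic, so $u$ is harmonic and $K\equiv0$. It then reads off the conclusion from the classification in Theorem~5.4 of \cite{GVWX}. This is short, but it outsources both the rigidity and the identification $Q_2^*\cong\mathbb H^2\times\mathbb H^2$.

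You instead integrate the frame equations in the $\mathrm{U}(2,2)$-frame $(\mathfrak f,\bar{\mathfrak f},\mathfrak f_z,\mathfrak f_{\bar z})$. For $\alpha\equiv0$, your observation that $\mathfrak f_{\bar z}\mp\overline{\mathfrak f_z}$ is Hermitian-orthogonal to the whole frame is clean and self-contained. It yields the explicit normal form $\mathfrak f=b+ia$ with $a$ constant, which is consistent with the paper's diagonal example in Section~\ref{sc:Ex} after normalisation and a constant phase change. Your frame also stays non-degenerate at $\beta=0$, where the real frame \eqref{eq: Frame identity component} breaks down because $e^{2u}-|\alpha|^2=|\beta|^2$.

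The cost is that you must supply the identification with $\mathbb H^2\times\mathbb H^2$ yourself, and you only sketch it. It does go through: for $b,b'\in a^{\perp}$ one has $\langle (a\wedge b)^{\pm},(a\wedge b')^{\pm}\rangle=\tfrac12\langle a,a\rangle\langle b,b'\rangle$. So $b\mapsto(a\wedge b)^{\pm}$ are both linear isometries up to the same constant factor, and the image is the graph of an isometry between the two factors.

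One step in the case $\beta\equiv0$ needs repair. Your claim that the frame coefficients depend only on $u,\alpha,\beta$ fails exactly there. Set $\gamma:=\langle\mathfrak f_{\bar z},\mathfrak f_{\bar z}\rangle$; then $\overline{\mathfrak f_{\bar z}}=e^{-u}(\bar\beta\,\mathfrak f_z+\bar\gamma\,\mathfrak f_{\bar z})$. Unitarity of the frame gives only $|\gamma|=|\alpha|$ and $\bar\alpha\beta+\bar\beta\gamma=0$. This determines $\gamma$ when $\beta\neq0$, but not when $\beta\equiv0$.

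The fix is one line. From $\mathfrak f_{z\bar z}=e^{u}\mathfrak f+\beta\bar{\mathfrak f}$ and horizontality you get $\gamma_z=0$. So $\gamma$ is antiholomorphic of modulus $e^{u}=1$, hence constant, and your system is indeed constant-coefficient. Note that the correct second-order relation is $\mathfrak f_{zz}=u_z\mathfrak f_z+\alpha\bar{\mathfrak f}$, not an ``$\mathfrak f_{zz}=\mathfrak f_{z\bar z}$-type'' one.

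With a constant phase of the lift and a rotation of $z$ you can then arrange $\alpha=\gamma=1$. Then $\mathfrak f_z$ and $\mathfrak f_{\bar z}$ are both real, so $\mathfrak f=P(x)+iQ(y)$ with $P''=4P$ and $Q''=4Q$. These trace hyperbolas in two mutually orthogonal planes of signature $(1,1)$. The two $\mathbb H^2$-components $(P\wedge Q)^{\pm}$ are then geodesics, one depending only on $x+y$ and the other only on $x-y$.

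This is the precise content of your remark about $A\pm B$. As written, you assert both that $A$ and $B$ each act in a single factor and that $A\pm B$ are the relevant generators. Which of the two holds depends on how $\gamma$ is normalised.
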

\begin{proof}
By the Gaussian curvature $K = -e^{-u}u_{z\bar{z}}$ and a straightforward computation, we obtain that $K = -2$  (resp. $K = 0$) when $\alpha \equiv 0$ (resp. $\beta \equiv 0$).
 By using Theorem 5.4 of \cite{GVWX}, the results can be obtained directly.
\end{proof}

We now characterize the minimality in terms of the family of flat
connections $\text{d} + \omega^{\lambda}$. 
\begin{Theorem}\label{Thm1}
    Let $f : M \to Q_{2}^{*}$ be a Lagrangian immersion and let $\mathrm{d} + \omega^{\lambda}$ be the family of connections in \eqref{lambda family}. Then the following statements are equivalent:  
    \begin{enumerate}
        \item The Lagrangian immersion $f$ is minimal.
        \item The connections $\mathrm{d} + \omega^{\lambda}$ are flat for all $\lambda \in \mathbb{S}^{1}$, i.e. \eqref{eq:l M-C1}-\eqref{eq:l M-C6} holds for all $\lambda \in \mathbb{S}^{1}$.
        \item The quadratic differential $\alpha \, \mathrm{d} z^2$ is holomorphic and $\varphi = \arg{(\beta)}$ is constant, where $\alpha, \beta$ are defined in \eqref{eq:alpha}, \eqref{complex function} and $\arg{(\beta)}$ denotes the argument of $\beta$.
    \end{enumerate}
\end{Theorem}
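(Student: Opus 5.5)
We prove the cycle (1)$\Rightarrow$(2)$\Rightarrow$(3)$\Rightarrow$(1), using the reduced systems \eqref{eq: condition 1}--\eqref{eq: condition 3} (flatness at $\lambda=1$, which always holds) and \eqref{eq:1 condition 1}--\eqref{eq: second order PDE} (flatness for all $\lambda$).

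\emph{(2)$\Rightarrow$(1).} If $\mathrm{d}+\omega^\lambda$ is flat for all $\lambda\in\mathbb S^1$, then \eqref{eq:1 condition 2} gives $\phi=0$, so $\Phi=0$ and $f$ is minimal by Theorem \ref{Thm: minimality}.

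\emph{(1)$\Rightarrow$(3).} Assume $\Phi=0$. Then \eqref{eq: condition 3} gives $\alpha_{\bar z}=0$, so $\alpha\,\mathrm{d}z^2$ is holomorphic. For the argument of $\beta$:
\begin{enumerate}
\item Setting $\phi=0$ in \eqref{eq: condition 2} and conjugating gives
\[
\tfrac12\alpha_z\bar\alpha-u_ze^{2u}+\bar\beta\beta_z=\tfrac12\alpha_{\bar z}\bar\beta .
\]
Since $\alpha_{\bar z}=0$, this is exactly \eqref{eq:1 condition 3}.
\item Differentiate \eqref{eq: condition 1}, $e^{2u}=|\alpha|^2+|\beta|^2$, with respect to $z$. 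Using $\alpha_{\bar z}=0$,
\[
2u_ze^{2u}=\alpha_z\bar\alpha+\beta_z\bar\beta+\beta\bar\beta_z .
\]
\item Substituting \eqref{eq:1 condition 3} into this identity gives $\bar\beta\beta_z=\beta\bar\beta_z$, i.e.\ $\partial_z\log(\beta/\bar\beta)=0$.
\item The function $\beta/\bar\beta$ is unimodular, so conjugating gives $\partial_{\bar z}\log(\beta/\bar\beta)=0$ as well. Hence $e^{2i\varphi}$ is constant, and $\varphi$ is constant on the open dense set where $\beta\neq0$. By Lemma \ref{lem:totally}, $\beta\not\equiv0$, and continuity extends the conclusion.
\end{enumerate}

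\emph{(3)$\Rightarrow$(2).} Assume $\alpha_{\bar z}=0$ and $\varphi$ is constant, so $\bar\beta\beta_z=\beta\bar\beta_z$.
\begin{enumerate}
\item Reversing the computation in (1)$\Rightarrow$(3), the $z$-derivative of \eqref{eq: condition 1} yields \eqref{eq:1 condition 3}.
\item Comparing \eqref{eq:1 condition 3} with the conjugate of \eqref{eq: condition 2} gives $e^{2u}\phi=0$, hence $\phi=0$. So \eqref{eq:1 condition 1}--\eqref{eq:1 condition 3} hold.
\item Since the system \eqref{eq:l M-C1}--\eqref{eq:l M-C6} is equivalent to the reduced list, it remains to verify \eqref{eq: second order PDE}.
\end{enumerate}

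For \eqref{eq: second order PDE}, the idea is that \eqref{eq:l M-C6} has no $\lambda$-dependence: the entries $p_i$ enter only through products $p_i\bar p_j$, where the $\lambda^{\mp1}$ factors cancel. So \eqref{eq:l M-C6} coincides with \eqref{eq:M-C6}, which holds because $\mathrm{d}+\omega$ is flat. In the same way, \eqref{eq:l M-C1} is \eqref{eq:M-C1}. For \eqref{eq:l M-C2}--\eqref{eq:l M-C5}, flatness for all $\lambda$ is equivalent to the vanishing of the $\lambda^{-1}$- and $\lambda$-parts separately:
\[
p_{1\bar z}+p_2\bar q=0,\qquad \bar p_{1z}+\bar p_2q=0,
\]
and similarly for the other three. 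These are conjugate to each other, so it suffices to check $p_{1\bar z}=-p_2\bar q$ and its analogues. Using the explicit formulas for $p_i$ and $q$ with $\phi=0$, $\alpha_{\bar z}=0$, and \eqref{eq:1 condition 3}, this is a direct (if tedious) calculation.

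\textbf{Main obstacle.} I expect the hardest step to be this final verification of the split equations $p_{i\bar z}=\mp p_j\bar q$ from the explicit expressions, or equivalently justifying the paper's reduction to \eqref{eq:1 condition 1}--\eqref{eq: second order PDE}. I would first simplify by writing $\beta=e^{i\varphi}|\beta|$ with $\varphi$ constant and $|\beta|^2=e^{2u}-|\alpha|^2$, which removes all independent $\beta$-derivatives, and only then differentiate.
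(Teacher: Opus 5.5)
Your argument is correct and uses the same computations as the paper's proof: you set $\phi=0$ in \eqref{eq: condition 2}--\eqref{eq: condition 3}, and you differentiate \eqref{eq: condition 1} and compare with \eqref{eq: condition 2} to trade constancy of $\arg\beta$ for $\phi=0$; the only difference is that you run the cycle as (2)$\Rightarrow$(1)$\Rightarrow$(3)$\Rightarrow$(2) rather than the paper's (1)$\Rightarrow$(2)$\Rightarrow$(3)$\Rightarrow$(1). Your observation that \eqref{eq:l M-C1} and \eqref{eq:l M-C6} are $\lambda$-independent justifies more cleanly what the paper calls ``clearly satisfied,'' and once you invoke the stated reduction to \eqref{eq:1 condition 1}--\eqref{eq: second order PDE}, as the paper does, the split-equation check you flag as the main obstacle is not needed.
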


\begin{proof}  Note that the structure equations \eqref{eq: condition 1}-\eqref{eq: condition 3} and \eqref{eq:1 condition 1}-\eqref{eq:1 condition 3} have similar expressions as those in the complex quadric $Q_{2}$ case \cite{KZ2025}. In fact, the derivation is parallel to that in the compact case, although the final compatibility
equation differs by sign in the reduced term in equation \eqref{eq: second order PDE}, i.e. the sign of $2|\beta|^{4}$ is opposite. 
   
   Let us prove (1) $\Rightarrow$ (2): 
   If $f$ is minimal then $\phi =0$, then \eqref{eq: condition 3} \eqref{eq: condition 2} can be simplified  to \eqref{eq:1 condition 2} and \eqref{eq:1 condition 3}, respectively, and \eqref{eq:1 condition 1} and  
    \eqref{eq: second order PDE} are clearly satisfied.
    
      (2) $\Rightarrow$ (3): 
      
Equation \eqref{eq:1 condition 2} implies that \(\alpha_{\bar z}=0\), and hence
\(\alpha dz^2\) is holomorphic. Moreover, 
\eqref{eq:1 condition 1} and \eqref{eq:1 condition 3} imply
\[
    \beta \bar\beta_{z}= \beta_{z} \bar\beta .
\]
On the open set where \(\beta\neq0\), this is equivalent to the constancy of
\(\beta/\bar\beta\), and hence to the constancy of the phase
\(\varphi=\arg(\beta)\). Since the assertion is local, after multiplying the
horizontal lift by a constant phase, we may assume that \(\beta=|\beta|\);
the resulting formulas extend across the zero set of \(\beta\) by continuity. 

   (3) $\Rightarrow$ (1): From the derivative of \eqref{eq: condition 1} with respect to $\bar z$ 
   we have $2u_{\bar z} e^{2u} = \alpha \bar \alpha_{\bar z} + \beta_{\bar z} \bar \beta +\beta \bar \beta_{\bar z}$.
   Since $\varphi$ is constant,  $\beta_{\bar z} \bar \beta = \beta \bar \beta_{\bar z}= 
   \tfrac12 |\beta|^2_{\bar z}$. Combining the above equation with \eqref{eq: condition 2} under the holomorphicity of $\alpha$, the minimality $\phi=0$ follows.
\end{proof}

\subsection{Minimal Lagrangian surfaces and the elliptic sinh-Gordon equation}

Consider a new local horizontal lift of a minimal Lagrangian surface $f$:
\begin{equation}\label{eq:newlift}
\hat{\mathfrak{f}} = e^{-\frac{i\varphi}{2}}\mathfrak{f},
\end{equation}
 where $\varphi = \arg(\beta)$ is constant by Theorem \ref{Thm1}. The new invariants $\hat{\alpha}$ and $\hat{\beta}$ of $\hat{\mathfrak{f}}$ are given by
\begin{equation}
    \hat{\alpha} := \la  \hat{\mathfrak{f}}_{z}, \hat{\mathfrak{f}}_{z}  \ra = e^{-i\varphi}\alpha, \quad \hat{\beta} := \la \hat{\mathfrak{f}}_{z},\hat{\mathfrak{f}}_{\bar{z}} \ra = |\beta|,
\end{equation}
i.e. $|\hat{\alpha}| = |\alpha|$ and $ \hat{\beta}$ is a non-negative real function. 
By $\hat{\beta} = \sqrt{e^{2u} - |\hat{\alpha}|^{2}}$, 
all the data in Maurer-Cartan form now can be represented by $u$ and $\hat \alpha$. 

\begin{Definition}\label{def:extend}
 Denote the new frame of the lift $\hat{\mathfrak f}$ in \eqref{eq:newlift}
  of a minimal Lagrangian immersion by $\hat{\mathcal F}$. 
 Then by Theorem \ref{Thm1}, there exists a family of 
 frames $\hat{\mathcal{F}}_{\lambda}$ such that $\hat{\mathcal{F}}_{\lambda}|_{\lambda=1} = \hat{\mathcal F}$, and we call $\hat{\mathcal F}_{\lambda}$
 the \textit{extended frame}.
\end{Definition}
The new family of connection one-forms $\text{d} + \hat{\omega}^{\lambda}$ parameterized by $\lambda \in \mathbb{S}^{1}$ can be explicitly written as follows:
\begin{equation}\label{eq: M-C hat omega}
    \hat{\omega}^{\lambda} = \hat{\mathcal{F}}_{\lambda}^{-1}\text{d}\hat{\mathcal{F}}_{\lambda} = \hat{\mathcal{U}}^{\lambda}\text{d}z + \hat{\mathcal{V}}^{\lambda}\text{d}\bar{z},
\end{equation}
where
\begin{equation*}
     \hat{\mathcal{U}}^{\lambda} = \begin{pmatrix}
 0 & 0 & -\lambda^{-1}r & -\lambda^{-1}ir \\
 0 & 0 & -\lambda^{-1}ip & -\lambda^{-1}p \\
 -\lambda^{-1}r & -\lambda^{-1}ip & 0 & \hat{q} \\
 -\lambda^{-1}ir & -\lambda^{-1}p & -\hat{q} & 0
\end{pmatrix}, \quad \hat{\mathcal{V}}^{\lambda} = \begin{pmatrix}
 0 & 0 & -\lambda\bar{r} & \lambda i\bar{r} \\
 0 & 0 & \lambda i\bar{p} & -\lambda\bar{p} \\
 -\lambda\bar{r} & \lambda i\bar{p} & 0 & \bar{\hat{q}} \\
 \lambda i\bar{r} & -\lambda\bar{p} & -\bar{\hat{q}} & 0
\end{pmatrix}, 
 \end{equation*}
with
\begin{equation*}
    p= \frac{1}{\sqrt{2}}\left( \frac{\hat{\alpha}+e^{u}-\hat{\beta}}{\sqrt{2e^{u} + \hat{\alpha} + \bar{\hat{\alpha}} }} \right),~r= -\frac{1}{\sqrt{2}} \left( \frac{\hat{\alpha} + e^{u} + \hat{\beta} }{\sqrt{2e^{u} + \hat{\alpha} + \bar{\hat{\alpha}} }}\right),~\hat{q} =  i \left( \frac{e^{-u}\frac{1}{2}\hat{\alpha}_{z}\hat{\beta} - e^{-u} \hat{\alpha} \hat{\beta}_{z} - \hat{\beta}_{z}}{2e^{u} + \hat{\alpha} + \bar{\hat{\alpha}} } \right).
\end{equation*}

\begin{Remark}\label{Rm:assumption}
Since the factor $e^{2u}- \alpha \bar \alpha$ in the denominators of 
$p_1, p_2, p_3, p_4$ and $q$ cancels with the same factor in the numerators, the function 
 $p,r$ and $\hat{q}$ are well-defined even when 
 $e^{2u}- \alpha \bar \alpha = 0$. Consequently, the condition $e^{2u}- \alpha \bar \alpha > 0$ is not required for the frame of  any minimal Lagrangian surfaces.
\end{Remark}

Then the flatness condition leads to 
\begin{align}\label{eq: M-C pr}
p_{\bar{z}} = ip\bar{\hat{q}}, \quad r_{\bar{z}} = -ir\bar{\hat{q}} \quad \mbox{and}\quad
        i\frac{\hat{q}_{\bar{z}}}{2} - i\frac{\bar{\hat{q}}_{z}}{2} = |r|^{2} - |p|^{2}. 
\end{align}
Thus we have
$\left( \log |r|^{2} \right)_{z\bar{z}} = 2\hat{\beta}$, which  leads to
\begin{equation}\label{eq: sinh-Gordon equation}
    \hat{u}_{z\bar{z}} - e^{\hat{u}} + |\hat{\alpha}|^{2}e^{-\hat{u}} = 0,
\end{equation}
where $\hat{u}$ is a real function defined by 
$\hat{u} := \log (2 |r|^2)$, and 
\eqref{eq: sinh-Gordon equation} is the \textit{elliptic sinh-Gordon equation.} 
Note that we can also represent the metric on $\mathbb{D} \subset M$ by 
\begin{equation}\label{eq: metric}
    2 e^u \text{d}z\text{d}\bar{z} = (e^{\hat{u}} + |\hat{\alpha}|^{2}e^{-\hat{u}})\text{d}z\text{d}\bar{z}. 
\end{equation}
By this correspondence, $u$ satisfies \eqref{eq: second order PDE}
 if and only if $\hat u$ satisfies \eqref{eq: sinh-Gordon equation}.

\subsection{A family of minimal Lagrangian surfaces}\label{sbsc:family}
To obtain a family of minimal Lagrangian surfaces of a given minimal Lagrangian surface $f$, 
let us consider the \textit{gauge transformation} of the extended frame $\hat{\mathcal{F}}_{\lambda}$. 
For a family of smooth maps $\mathcal{G}_{\lambda}: \mathbb{D} \to \rm{SO}(2) \times \rm{SO}(2)$ 
parametrized by $\lambda \in \mathbb S^1$, let $\tilde{\mathcal{F}}_{\lambda} := \hat{\mathcal{F}}_{\lambda}\mathcal{G}_{\lambda}$. Now we explicitly choose $\mathcal{G}_{\lambda}$ as follows:
 Let $r = |r|e^{i\arg r}$, 
where $\arg r \in \mathbb{R}$ is the argument of $r$, and let 
$\lambda = e^{i \theta} (\theta \in \R)$. The gauge $\mathcal{G}_{\lambda}$ is as follows:  
\begin{equation*}
    \mathcal{G}_{\lambda} = 
    \begin{pmatrix}
 \id_{2 \times 2} & 0 \\
 0 & A
\end{pmatrix}, \quad A = \begin{pmatrix}
  \cos (\arg r- \theta) & \sin (\arg r- \theta) \\
  -\sin (\arg r -\theta) & \cos (\arg r- \theta)
\end{pmatrix} \in \rm{SO}(2).
\end{equation*}
 Then by $|r| = \frac{1}{\sqrt{2}}e^{\hat{u}/2}$ and a straightforward computation, it follows  that 
\begin{equation}\label{eq: Maurer-Cartan form tilde}
    \tilde{{\omega}}^{\lambda} := \tilde{\mathcal{F}}_{\lambda}^{-1}\text{d}\tilde{\mathcal{F}}_{\lambda} =  \mathcal{G}_{\lambda}^{-1}\hat{\omega}^{\lambda}\mathcal{G}_{\lambda} + \mathcal{G}_{\lambda}^{-1}\text{d}\mathcal{G}_{\lambda} = \tilde{\mathcal{U}}^{\lambda}\text{d}z + \tilde{\mathcal{V}}^{\lambda}\text{d}\bar{z},
\end{equation}
where
\begin{equation}\label{eq: tilde U}
    \tilde{\mathcal{U}}^{\lambda} = \frac{\sqrt{2}}{2}\begin{pmatrix}
 0 & 0 & -e^{\hat{u}/2} & -ie^{\hat{u}/2} \\
 0 & 0 & i \lambda^{-2}\hat{\alpha}e^{-\hat{u}/2} & \lambda^{-2}\hat{\alpha}e^{-\hat{u}/2} \\
 -e^{\hat{u}/2} & i\lambda^{-2}\hat{\alpha}e^{-\hat{u}/2} & 0 & -\frac{i}{\sqrt{2}}\hat{u}_{z} \\
 -ie^{\hat{u}/2} & \lambda^{-2}\hat{\alpha}e^{-\hat{u}/2} & \frac{i}{\sqrt{2}}\hat{u}_{z} & 0
\end{pmatrix}, 
\end{equation}
and
\begin{equation}\label{eq: tilde V}
\tilde{\mathcal{V}}^{\lambda} = \frac{\sqrt{2}}{2}\begin{pmatrix}
 0 & 0 & -e^{\hat{u}/2} & ie^{\hat{u}/2} \\
 0 & 0 & -i \lambda^2 \bar{\hat{\alpha}}e^{-\hat{u}/2} & \lambda^2 \bar{\hat{\alpha}}e^{-\hat{u}/2} \\
 - e^{\hat{u}/2} & - i\lambda^2 \bar{\hat{\alpha}}e^{-\hat{u}/2} & 0 & \frac{i}{\sqrt{2}}\hat{u}_{\bar{z}} \\
 ie^{\hat{u}/2} & \lambda^2 \bar{\hat{\alpha}}e^{-\hat{u}/2} & -\frac{i}{\sqrt{2}}\hat{u}_{\bar{z}} & 0
\end{pmatrix}.
\end{equation}

From the form of $\tilde \omega^{\lambda}$, it is easy to see that there exists 
a family of minimal Lagrangian surfaces $f^{\lambda}$ with fundamental 
quantities
\[
\hat u^{\lambda} = \hat u, \quad \hat \alpha^{\lambda} = \lambda^{-2} \hat \alpha, \quad 
\hat \beta^{\lambda} = \hat \beta.
\]
The induced metric of $f^{\lambda}$ is 
\[
       (e^{\hat{u}} + |\hat{\alpha}^{\lambda}|^{2}e^{-\hat{u}})\text{d}z\text{d}\bar{z} =  (e^{\hat{u}} + |\hat{\alpha}|^{2}e^{-\hat{u}})\text{d}z\text{d}\bar{z}.
\] 
Moreover, $f^{\lambda}$ is given by the sum of the first two columns of $\tilde{\mathcal F}_{\lambda}$, i.e., 
\[
f^{\lambda}=[\hat{\mathfrak f}^{\lambda}] \in Q_2^{*}, \quad 
\hat{\mathfrak f}^{\lambda} = \frac{\sqrt{2}}2(\tilde{\mathcal F}_{\lambda}^1 + i \tilde{\mathcal F}_{\lambda}^2) \in \C^4_{2},
\]
where $\tilde{\mathcal F}_{\lambda}^j (j=1, 2)$ denotes the $j$-th column of 
$\tilde{\mathcal F}_{\lambda}$.
We summarize the above discussion as the following theorem:
\begin{Theorem}\label{thm2}
   Let $f: M \to Q_{2}^{*}$ be a minimal Lagrangian immersion with induced metric $2 e^{u}\mathrm{d}z \mathrm{d}\bar z$ and holomorphic quadratic differential $\hat \alpha\,  \mathrm{d}z^{2}$. Then there exists an $\mathbb{S}^{1}$-family of minimal Lagrangian immersions $\{ f^{\lambda} \}$ with the same induced metric and holomorphic quadratic differential $\hat \alpha^{\lambda} \mathrm{d}z^{2} = \lambda^{-2} \hat \alpha \, \mathrm{d}z^{2}$.
\end{Theorem}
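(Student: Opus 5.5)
The plan is to build $f^{\lambda}$ from the extended frame and then read off its invariants from the explicit gauged Maurer--Cartan form $\tilde\omega^{\lambda}$ in \eqref{eq: Maurer-Cartan form tilde}.

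First, existence. By Theorem~\ref{Thm1}, minimality of $f$ means $\mathrm{d}+\hat\omega^{\lambda}$ is flat for every $\lambda\in\mathbb S^1$. Since $\mathcal G_\lambda$ is a gauge transformation, $\mathrm{d}+\tilde\omega^{\lambda}$ is flat as well. For $\lambda\in\mathbb S^1$ the coefficient $\lambda^{\pm2}$ and the factor $i$ appear in $\tilde{\mathcal U}^\lambda$, $\tilde{\mathcal V}^\lambda$ in such a way that $\tilde{\mathcal V}^\lambda=\overline{\tilde{\mathcal U}^\lambda}$. Moreover both are $\eta$-skew, so $\tilde\omega^\lambda$ is a real $\mathfrak{so}(2,2)$-valued one-form. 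On the simply connected domain $\mathbb D$ we then solve $\tilde{\mathcal F}_\lambda^{-1}\mathrm d\tilde{\mathcal F}_\lambda=\tilde\omega^\lambda$ with $\tilde{\mathcal F}_\lambda(z_0)=\id$, obtaining $\tilde{\mathcal F}_\lambda:\mathbb D\to\SO$. We set $\hat{\mathfrak f}^\lambda=\tfrac{\sqrt2}{2}(\tilde{\mathcal F}^1_\lambda+i\tilde{\mathcal F}^2_\lambda)$ and $f^\lambda=[\hat{\mathfrak f}^\lambda]$. Because the columns are $\eta$-orthonormal with signs $(-,-,+,+)$, we get $\la\hat{\mathfrak f}^\lambda,\hat{\mathfrak f}^\lambda\ra=0$ and $(\hat{\mathfrak f}^\lambda,\hat{\mathfrak f}^\lambda)=-1$. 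Hence $\hat{\mathfrak f}^\lambda\in\mathbb H^7_3(-1)$ and $f^\lambda\in Q_2^*$.

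Next, the invariants. Differentiating via the first two columns of $\tilde{\mathcal U}^\lambda$ gives
\[
\hat{\mathfrak f}^\lambda_z=\tfrac12\big(-e^{\hat u/2}(\tilde{\mathcal F}^3_\lambda+i\tilde{\mathcal F}^4_\lambda)+i\lambda^{-2}\hat\alpha e^{-\hat u/2}(\tilde{\mathcal F}^3_\lambda-i\tilde{\mathcal F}^4_\lambda)\big),
\]
up to the normalization constants. This expression has no components along $\tilde{\mathcal F}^1_\lambda$ or $\tilde{\mathcal F}^2_\lambda$. Therefore $\la\hat{\mathfrak f}^\lambda_z,\hat{\mathfrak f}^\lambda\ra=\la\hat{\mathfrak f}^\lambda_z,\overline{\hat{\mathfrak f}^\lambda}\ra=0$, so the lift is horizontal. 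The analogous computation holds for $\partial_{\bar z}$. From these expressions I will compute:
\begin{itemize}
\item $\la\hat{\mathfrak f}^\lambda_z,\hat{\mathfrak f}^\lambda_z\ra=\lambda^{-2}\hat\alpha$;
\item $\la\hat{\mathfrak f}^\lambda_z,\overline{\hat{\mathfrak f}^\lambda_z}\ra=\tfrac12(e^{\hat u}+|\hat\alpha|^2e^{-\hat u})$, which is independent of $\lambda$ because $|\lambda|=1$;
\item $\la\hat{\mathfrak f}^\lambda_z,\overline{\hat{\mathfrak f}^\lambda_{\bar z}}\ra=0$;
\item $\la\hat{\mathfrak f}^\lambda_z,\hat{\mathfrak f}^\lambda_{\bar z}\ra=\hat\beta\ge0$.
\end{itemize}
The third identity gives conformality and the Lagrangian condition, the second gives the metric via \eqref{eq: metric}, and the first gives the quadratic differential. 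The equality at $\lambda=1$ is a consistency check: there $\tilde{\mathcal F}_1=\hat{\mathcal F}\mathcal G_1$, and the gauge acts only on the last two columns, so $\hat{\mathfrak f}^1=\hat{\mathfrak f}$.

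Finally, minimality. The quadratic differential $\lambda^{-2}\hat\alpha\,\mathrm dz^2$ is holomorphic, and $\arg\hat\beta^\lambda=0$ is constant. So Theorem~\ref{Thm1}, (3)$\Rightarrow$(1), shows $f^\lambda$ is minimal. One could alternatively observe that $\tilde\omega^\lambda$ again admits a spectral deformation $\lambda\mapsto\lambda\mu$.

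The main obstacle is the bookkeeping in the invariant computations: verifying that $\tilde\omega^\lambda$ is truly real for $|\lambda|=1$, and that the combinations $\tilde{\mathcal F}^3\pm i\tilde{\mathcal F}^4$ have the pairings $\la\cdot,\cdot\ra$ and $(\cdot,\cdot)$ that produce $\lambda^{-2}\hat\alpha$ and $\hat\beta$ with the right constants. I would handle this by recording once the Gram matrix of $E_\pm=\tilde{\mathcal F}^3\pm i\tilde{\mathcal F}^4$: $\la E_+,E_-\ra=2$, $\la E_\pm,E_\pm\ra=0$, and $\overline{E_\pm}=E_\mp$. Every pairing then reduces to reading off coefficients. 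The regularity at points where $e^{2u}=|\alpha|^2$ is already covered by Remark~\ref{Rm:assumption}.
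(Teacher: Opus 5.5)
Your proposal is correct and follows the paper's own argument: gauge to $\tilde\omega^{\lambda}$, note that it is a real, flat $\mathfrak{so}(2,2)$-valued form in which $\lambda$ enters only through $\lambda^{-2}\hat\alpha$, integrate it, and read off $\hat u^{\lambda}=\hat u$, $\hat\alpha^{\lambda}=\lambda^{-2}\hat\alpha$, $\hat\beta^{\lambda}=\hat\beta$ from $\hat{\mathfrak f}^{\lambda}=\tfrac{\sqrt2}{2}(\tilde{\mathcal F}^1_\lambda+i\tilde{\mathcal F}^2_\lambda)$; you simply carry out the verifications that the paper dismisses as ``easy to see''.

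There is one slip. In your display for $\hat{\mathfrak f}^\lambda_z$ you dropped the factor $i$ in front of $\tilde{\mathcal F}^2_\lambda$. With $E_\pm=\tilde{\mathcal F}^3_\lambda\pm i\tilde{\mathcal F}^4_\lambda$, the correct formula is $\hat{\mathfrak f}^{\lambda}_z=\tfrac12\bigl(-e^{\hat u/2}E_+-\lambda^{-2}\hat\alpha e^{-\hat u/2}E_-\bigr)$. Your version would give $-i\lambda^{-2}\hat\alpha$, whereas the corrected formula gives exactly the values in your bullet list. Also, with your normalization $\tilde{\mathcal F}_\lambda(z_0)=\id$, the surface $f^{1}$ agrees with $f$ only up to a constant isometry in $\SO$, not identically.
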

 \subsection{Minimal Lagrangian surfaces and spacelike maximal surfaces}
 We now compare the Maurer-Cartan form of 
a minimal Lagrangian surface with that of a spacelike maximal surface in $\mathbb{H}^{3}_{1}$. Then we establish the following theorem, which should be viewed as the anti-de Sitter counterpart of Theorem 1.9 in \cite{KZ2025}.  
\begin{Theorem}\label{Thm corresponding}
 Any spacelike maximal surface $f_{max}$ in $\mathbb H^3_{1}$ with unit normal $N$, metric 
 $2 e^{\hat u}\, \mathrm{d} z\mathrm{d}\bar{z}$ and Hopf differential $\mathcal{Q}\, \mathrm{d}z^{2}$, induces a minimal Lagrangian surface  $f=[f_{max} + iN ] \in Q_2^{*}$, whose holomorphic differential is $-i \mathcal{Q} \mathrm{d}z^{2}$ and whose metric is
 \begin{equation}\label{eq:liftmetric}
 2 e^{u}\, \mathrm{d} z\mathrm{d}\bar{z}
   = \bigl(e^{\hat u} + |\mathcal{Q}|^{2} e^{-\hat u}\bigr)\, \mathrm{d} z\mathrm{d}\bar{z}.
\end{equation}
 Conversely, given a minimal Lagrangian surface \(f\) in \(Q_2^*\) with holomorphic differential 
 $\hat \alpha \, \mathrm{d}z^2$ and metric 
 $2e^{u}\text{d}z\text{d}\bar{z}$,  after choosing a normalized horizontal lift, 
 locally if necessary, there exists a pair
\( g= (f_{\max},N)\) in \(T_1^-\mathbb H^3_1\),
 where
 \begin{equation*}
     T^{-}_{1} \mathbb H^3_{1} := \left\{ (p, v)\in T\mathbb{H}^{3}_{1} \mid v=(v_{0},v_{1},v_{2}) \in T_{p}\mathbb{H}^{3}_{1} \cong \mathbb{R}^{3}_{1},~ -v_{0}^{2} + v_{1}^{2} + v_{2}^{2} = -1  \right\}.
 \end{equation*}
 Moreover, both projections $f_{max}$ and $N$ have the same Hopf differential $i\hat \alpha  \, \mathrm{d}z^2$, and the metrics $2e^{\hat{u}}\text{d}z\text{d}\bar{z}$ of $f_{max}$ 
 and $2e^{\tilde{u}}\text{d}z\text{d}\bar{z}$ of $N$ 
 are given by 
 \begin{equation*}
  e^{\hat{u}} = e^{u} + \sqrt{e^{2u} - |\hat{\alpha}|^{2}}, \quad e^{\tilde{u}} = e^{u} - \sqrt{e^{2u} - |\hat{\alpha}|^{2}}. 
 \end{equation*}
\end{Theorem}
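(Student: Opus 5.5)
For the direct part I would work with moving frames in $\mathbb R^4_2$. Realize $\mathbb H^3_1=\{x\in\mathbb R^4_2 \mid \la x,x\ra=-1\}$ with the signature $\eta=\di(-1,-1,1,1)$ of \eqref{eq:sclar}. For a spacelike maximal immersion $f_{max}$ with timelike unit normal $N$ (so $\la N,N\ra=-1$ and $\la N,f_{max}\ra=0$), use the standard structure equations:
\[
 f_{zz}=\hat u_z f_z+\mathcal Q N,\qquad f_{z\bar z}=e^{\hat u}f_{max},\qquad N_z=\mathcal Q e^{-\hat u}f_{\bar z}
\]
up to sign conventions. The mean curvature term is absent by maximality, and Codazzi gives $\mathcal Q_{\bar z}=0$. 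Set $\mathfrak f=\tfrac{1}{\sqrt2}(f_{max}+iN)$. Then:
\begin{itemize}
\item $\la\mathfrak f,\mathfrak f\ra=\tfrac12(-1+1)+i\la f_{max},N\ra=0$, so $[\mathfrak f]\in Q_2^*$.
\item $(\mathfrak f,\mathfrak f)=-1$, so $\mathfrak f\in\mathbb H^7_3(-1)$.
\item Horizontality $\la\mathfrak f_z,\bar{\mathfrak f}\ra=0$ follows from $\la f_z,f_{max}\ra=\la N_z,N\ra=0$ and $\la f_z,N\ra=-\la f_{max},N_z\ra=0$.
\end{itemize}
Next compute $\mathfrak f_z=\tfrac1{\sqrt2}\bigl(f_z+i\mathcal Qe^{-\hat u}f_{\bar z}\bigr)$, using $\la f_z,f_z\ra=0$ and $\la f_z,f_{\bar z}\ra=e^{\hat u}$. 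This yields
\[
\alpha=\la\mathfrak f_z,\mathfrak f_z\ra=i\mathcal Q,\qquad \beta=\la\mathfrak f_z,\mathfrak f_{\bar z}\ra,\qquad \la\mathfrak f_z,\overline{\mathfrak f_z}\ra=\tfrac12\bigl(e^{\hat u}+|\mathcal Q|^2e^{-\hat u}\bigr).
\]
The last identity gives \eqref{eq:liftmetric}. The sign $\pm i\mathcal Q$ depends on the orientation convention for $N$; I would fix conventions so that the stated $-i\mathcal Q$ appears, possibly by replacing $N$ with $-N$.

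It remains to show $f$ is Lagrangian and minimal. The Lagrangian condition $\la\mathfrak f_z,\overline{\mathfrak f_{\bar z}}\ra=0$ follows from the same inner products. For minimality I would invoke Theorem~\ref{Thm1}(3): $\alpha$ is holomorphic because $\mathcal Q$ is, and $\beta=\tfrac12(e^{\hat u}-|\mathcal Q|^2e^{-\hat u})$ is real, so its argument is constant. Alternatively, check $\phi=e^{-u}\la\mathfrak f_{z\bar z},\overline{\mathfrak f_{\bar z}}\ra=0$ directly from $f_{z\bar z}=e^{\hat u}f_{max}$ and Theorem~\ref{Thm: minimality}.

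For the converse, take the normalized horizontal lift $\hat{\mathfrak f}$ of \eqref{eq:newlift}, so that $\hat\beta\ge0$. Define
\[
f_{max}=\sqrt2\,\Re\hat{\mathfrak f},\qquad N=\sqrt2\,\Im\hat{\mathfrak f}.
\]
These are the first two columns of $\hat{\mathcal F}$ in \eqref{eq: Frame identity component}. From $\la\hat{\mathfrak f},\hat{\mathfrak f}\ra=0$ and $(\hat{\mathfrak f},\hat{\mathfrak f})=-1$ we get $\la f_{max},f_{max}\ra=\la N,N\ra=-1$ and $\la f_{max},N\ra=0$, so $(f_{max},N)\in T_1^-\mathbb H^3_1$. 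Then compute $\la (f_{max})_z,(f_{max})_{\bar z}\ra=\tfrac12\bigl(\la\hat{\mathfrak f}_z,\overline{\hat{\mathfrak f}_z}\ra+\hat\beta+\bar{\hat\beta}\bigr)$; the remaining terms vanish by the Lagrangian condition. With the paper's normalization this gives $e^{\hat u}=e^u+\hat\beta=e^u+\sqrt{e^{2u}-|\hat\alpha|^2}$, and similarly $e^{\tilde u}=e^u-\hat\beta$ for $N$. Conformality follows from $\la(f_{max})_z,(f_{max})_z\ra=\tfrac12\Re$-type combinations of $\hat\alpha$ and $\la\hat{\mathfrak f}_z,\overline{\hat{\mathfrak f}_{\bar z}}\ra=0$. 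These need care, and I expect the $\alpha$ terms to cancel only because of the specific phase normalization.

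The main obstacle is precisely these converse computations: checking that $(f_{max})_z$ is null, that $N$ is normal to $f_{max}$, and that the Hopf differentials $\la (f_{max})_{zz},N\ra$ and the analogous one for $N$ both equal $i\hat\alpha$. Here I would use $\phi=0$, $\hat\alpha_{\bar z}=0$ and the structure equations \eqref{eq:1 condition 1}--\eqref{eq:1 condition 3}. Rather than expanding by hand, I would read these quantities off the gauged Maurer--Cartan form $\tilde\omega^{1}$ of \eqref{eq: tilde U}--\eqref{eq: tilde V}. Its columns 1 and 2 are $f_{max}$ and $N$ (after the gauge $\mathcal G$, which does not touch them). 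The $(1,3),(1,4)$ entries $-\tfrac{\sqrt2}{2}e^{\hat u/2}(1,i)$ show $(f_{max})_z$ is null with $|(f_{max})_z|^2\propto e^{\hat u}$. The $(2,3),(2,4)$ entries show that $N_z$ is null with length $|\hat\alpha|^2e^{-\hat u}$, which equals $e^{\tilde u}$ by \eqref{eq: metric}. The $(1,2)$ block being zero gives $\la df_{max},N\ra=0$. Finally, the cross terms between the two rows give the common Hopf differential $i\hat\alpha$, again up to a sign fixed by orientation.
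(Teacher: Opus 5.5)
Your proposal is correct. For the converse it is essentially the paper's argument. The paper's whole proof is the observation that the Maurer--Cartan form of the adapted frame $\sigma$ in \eqref{eq: frame sigma} coincides with $\tilde\omega^{1}$ from \eqref{eq: tilde U}--\eqref{eq: tilde V} once $\mathcal Q=i\hat\alpha$. Hence the first two columns of a single $\mathrm{SO}_0(2,2)$-frame are simultaneously $(f_{max},N)$ and $\sqrt2(\Re\hat{\mathfrak f},\Im\hat{\mathfrak f})$, and the paper uses this one identification for both directions.

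Your forward direction takes a genuinely different route. You check horizontality, $\langle\mathfrak f,\mathfrak f\rangle=0$, $\alpha$, $\beta$, the metric and minimality by hand from the Gauss--Weingarten equations of $f_{max}$. This is more elementary and self-contained. It never passes through the frame \eqref{eq: Frame identity component}, whose formula degenerates where $\beta=0$, i.e.\ where $e^{\hat u}=|\mathcal Q|$. It also avoids the step the paper leaves implicit: that a frame with Maurer--Cartan form $\tilde\omega^1$ necessarily comes from a minimal Lagrangian surface. The paper's route is shorter and ties the correspondence directly to the loop $\tilde\omega^\lambda$ used later.

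Some corrections:
\begin{enumerate}
\item \emph{Sign.} With $\mathcal Q=\langle (f_{max})_{zz},N\rangle$ and $\langle N,N\rangle=-1$, one has $(f_{max})_{zz}=\hat u_z(f_{max})_z-\mathcal QN$ and $N_z=-\mathcal Qe^{-\hat u}(f_{max})_{\bar z}$. These give $\alpha=-i\mathcal Q$ directly. Replacing $N$ by $-N$ changes nothing, since $\mathcal Q$ and $[f_{max}+iN]$ flip together and the relation $\alpha=-i\mathcal Q$ is invariant.
\item \emph{Minimality.} Use your direct check rather than Theorem \ref{Thm1}(3). Since $\mathcal Q_{\bar z}=0$, one gets $N_{z\bar z}=|\mathcal Q|^2e^{-\hat u}N$. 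So $\mathfrak f_{z\bar z}\in\mathrm{span}\{f_{max},N\}$ is orthogonal to $\overline{\mathfrak f_{\bar z}}$, and $\phi=0$. By contrast, the real function $\beta=\frac12(e^{\hat u}-|\mathcal Q|^2e^{-\hat u})$ can change sign and vanish, so ``$\arg\beta$ constant'' is not literally true.
\item \emph{Terminology.} The condition $\langle\mathfrak f_z,\overline{\mathfrak f_{\bar z}}\rangle=0$ is conformality; the Lagrangian property comes from horizontality.
\item \emph{Metric in the converse.} All four terms survive: $\langle(f_{max})_z,(f_{max})_{\bar z}\rangle=\frac12(2e^u+\hat\beta+\bar{\hat\beta})$. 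Your displayed formula drops one $e^u$, and nothing vanishes by the Lagrangian condition.
\item \emph{Conformality of $f_{max}$.} Your expectation is right. For any horizontal lift, $\langle\mathfrak f_{\bar z},\mathfrak f_{\bar z}\rangle=-\bar\alpha\beta/\bar\beta$ where $\beta\neq0$. This equals $-\bar{\hat\alpha}$ for the normalized lift precisely because $\hat\beta$ is real, and then $\langle(f_{max})_z,(f_{max})_z\rangle=\frac12(\hat\alpha-\hat\alpha)=0$.
\item \emph{Missing check.} You never verify that $f_{max}$ and $N$ are maximal, but this also reads off $\tilde\omega^1$. Write $e_3,e_4$ for the last two columns of $\tilde{\mathcal F}_1$. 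Both $(f_{max})_z$ and $N_{\bar z}$ are multiples of the null vector $e_3+ie_4$, so $\langle(f_{max})_{z\bar z},N\rangle=-\langle(f_{max})_z,N_{\bar z}\rangle=0$. The same argument with $e_3-ie_4$ works for $N$.
\end{enumerate}
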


\begin{proof}
 Let $f_{max}$ be a spacelike maximal surface in $\mathbb{H}^{3}_{1}$ with induced metric  $ds^{2}_{M} = 2e^{\hat{u}}\text{d}z\text{d}\bar{z}$ and Hopf differential $\mathcal{Q} \mathrm{d}z^{2}$. Choose the $\SO$-frame of $f_{max}$ such that $\sigma(z_{0}) = \mathrm{Id}$ as follows: 
 \begin{equation}\label{eq: frame sigma}
     \sigma = \left( f_{max}, N , -\frac{(f_{max})_{z} + (f_{max})_{\bar{z}}}{\sqrt{2}e^{\hat{u}/2}}, \frac{i\left( (f_{max})_{z} - (f_{max})_{\bar{z}} \right)}{\sqrt{2}e^{\hat{u}/2}} \right), 
 \end{equation}
where $N$ is the normal vector of $f_{max}$. 
By setting $\mathcal{Q} \mathrm{d}z^{2} = i\hat{\alpha} \mathrm{d}z^{2}$, the Maurer-Cartan form obtained by \eqref{eq: frame sigma} is the same as \eqref{eq: Maurer-Cartan form tilde} for $\lambda = 1$. It follows that the minimal Lagrangian surface $f$ and its local lift $\hat{\mathfrak{f}}$ defined above can be obtained by 
\begin{equation}
f=[ \hat{\mathfrak{f}}] \in Q_{2}^{*}, \quad \hat{\mathfrak{f}} := \frac{\sqrt{2}}{2}\left( f_{max} + iN \right) \in \mathbb{C}^{4}_{2}, 
\end{equation}
with holomorphic differential $\langle \hat{\mathfrak{f}}_{z} , \hat{\mathfrak{f}}_{z} \rangle \mathrm{d}z^{2} = -i\mathcal{Q} \mathrm{d}z^{2}$.
 Thus the first claim follows. Moreover, note that $N$ corresponds to the same minimal Lagrangian surface $f$ in $Q_{2}^{*}$ up to a conjugation. 

Conversely, let $f$ be a minimal Lagrangian surface in $Q_{2}^{*}$ with the holomorphic differential $\hat{\alpha} \mathrm{d}z^{2}$ and $\tilde{\omega}^{\lambda}$ be the Maurer-Cartan form defined in \eqref{eq: Maurer-Cartan form tilde}. Since $\tilde{\omega}^{\lambda}|_{\lambda = 1}$ is the same as the Maurer-Cartan form of the $\SO$-frame \eqref{eq: frame sigma} of $f_{max}$, there exists a pair of spacelike maximal surfaces in $\mathbb{H}^{3}_{1}$ as follows:
\begin{equation}
    f_{max}= \frac{1}{\sqrt{2}}\big( \hat{\mathfrak{f}} + \bar{\hat{\mathfrak{f}}} \big) = \sqrt{2} 
    \operatorname{Re}  \hat{\mathfrak{f}} \in \R^4_2, \quad N = -\frac{i}{\sqrt{2}}\big( \hat{\mathfrak{f}} - \bar{\hat{\mathfrak{f}}} \big) = \sqrt{2}\operatorname{Im}\hat{\mathfrak{f}} \in \R^4_2,
\end{equation}
with $\langle f_{max}, f_{max}\rangle =\langle N, N\rangle=-1$,  and they share the same Hopf differential $\langle (f_{max})_{zz}, N  \rangle \mathrm{d}z^{2} =  i\hat{\alpha} \mathrm{d}z^{2}$. 
Similarly, there exists another pair of spacelike maximal surfaces $N = \sqrt{2} \operatorname{Re}\hat{\mathfrak{f}},~ f_{max} = \sqrt{2}\operatorname{Im}\hat{\mathfrak{f}}$ in $\mathbb{H}^{3}_{1}$. Thus \(f\) determines such a pair \((f_{\max},N)\), up to the natural
ambiguities of the horizontal lift and the choice of the normal.
This completes the proof. 
\end{proof}

\begin{Remark}
The metric defined in \eqref{eq: metric} can be thought of as half the Sasakian metric 
 on the timelike unit tangent bundle $T^{-}_{1}\mathbb H^3_{1} = \mathbb H^3_{1} \times \mathbb H^2$ for 
 a spacelike maximal surface $f_{max}$ in $\mathbb H^3_{1}$, that is, the following relation holds:
 \begin{equation*}
2 ds^2_{Lag}= 4 e^{u}\, \mathrm{d} z\mathrm{d}\bar{z}
   =  \bigl(2 e^{\hat u} + 2|\mathcal{Q}|^{2} e^{-\hat u}\bigr)\, \mathrm{d} z\mathrm{d}\bar{z}
   = ds^2_{Sasaki}.
\end{equation*} 
\end{Remark}

\section{The loop group method for minimal Lagrangian surfaces in $Q_2^*$}\label{sc:DPW}
 The PDE in \eqref{eq: sinh-Gordon equation} is merely
 the structure equation for a harmonic map from a Riemann surface into $\mathbb H^2$.
 Hence, one naturally expects a harmonic map associated with any minimal Lagrangian surface. In addition, we develop the loop group method for such surfaces, following the framework of \cite{DPW}.

\subsection{Minimal Lagrangian surfaces and harmonic maps into $\mathbb H^2$}
\label{subsc:S2harm}
To obtain an associated harmonic map to $\mathbb H^2$, we make use of the following Lie group isomorphism and Lie algebra isomorphism: 
\begin{equation}\label{eq:psi}
 \SO \cong (\SU \times \SU)/\boldsymbol{Z}_2, \quad \mathfrak{so}(2,2) \cong \mathfrak{su}(1,1) \oplus \mathfrak{su}(1,1).
\end{equation}
Here $\SU=\left\{ A \in \mathrm{SL}(2, \C) \mid \bar A^T \eta A = \eta \right\}$
with $\eta = \mathrm{diag} (1, -1)$ is the indefinite special unitary group.
The explicit form of the isomorphism can be found in \cite[Theorem 10.7, Theorem 11.10]{Yokota}.
In the loop group method, we can also use $\Lambda \mathfrak{so}(2,2) \cong \Lambda \mathfrak{su}(1,1) \oplus \Lambda \mathfrak{su}(1,1)$, where 
\[
\Lambda \mathfrak{g} := \{ g : \mathbb S^1 \to \mathfrak g \}
\]
 denotes the \textit{loop algebra} of $\mathfrak g$, which is an infinite-dimensional Banach Lie
 algebra with respect to a suitable norm \cite{PS}.
 Hence the associated Maurer-Cartan form $\hat{\omega}^{\lambda}$ in \eqref{eq: M-C hat omega} can 
  be represented as follows:
\begin{equation}
\begin{aligned}\label{eq:hatu}
\hat{U}^{\lambda}= (\hat{U}_1^{\lambda}, \hat{U}_2^{\lambda})=&\left( \begin{pmatrix}
 \frac{1}{4}\hat{u}_{z} & -\frac{i}{\sqrt{2}}\lambda^{-1} e^{\hat{u}/2} \\
  \frac{i}{\sqrt{2}}\lambda^{-1}\hat{\alpha} e^{-\hat{u}/2} & -\frac{1}{4}\hat{u}_{z}
\end{pmatrix} ,  \begin{pmatrix}
 \frac{1}{4}\hat{u}_{z} & -\frac{1}{\sqrt{2}}\lambda^{-1} e^{\hat{u}/2} \\
  \frac{1}{\sqrt{2}}\lambda^{-1}\hat{\alpha}e^{-\hat{u}/2} & -\frac{1}{4}\hat{u}_{z}
\end{pmatrix} \right), 
\end{aligned}
\end{equation}
and
\begin{equation}\label{eq:hatv}
\begin{aligned}
\hat{V}^{\lambda} =(\hat{V}_1^{\lambda}, \hat{V}_2^{\lambda})= &\left( \begin{pmatrix}
 -\frac{1}{4}\hat{u}_{\bar{z}} & -\frac{i}{\sqrt{2}}\lambda \bar{\hat{\alpha}}e^{-\hat{u}/2} \\
 \frac{i}{\sqrt{2}}\lambda e^{\hat{u}/2} & \frac{1}{4}\hat{u}_{\bar{z}}
\end{pmatrix} ,   \begin{pmatrix}
-\frac{1}{4}\hat{u}_{\bar{z}} & \frac{1}{\sqrt{2}}\lambda \bar{\hat{\alpha}}e^{-\hat{u}/2} \\
  -\frac{1}{\sqrt{2}} \lambda e^{\hat{u}/2} & \frac{1}{4}\hat{u}_{\bar{z}}
\end{pmatrix} \right).
\end{aligned}
\end{equation}
 From the form of $\left( \hat{U}_{1}^{\lambda}, \hat{U}_{2}^{\lambda} \right)$ and $\left( \hat{V}_{1}^{\lambda}, \hat{V}_{2}^{\lambda} \right)$,  the corresponding pair of maps is 
 given by $(F_{\lambda}, F_{i\lambda})$ such that 
\begin{equation}\label{eq: Maurer-Cartan form tilde12}
\hat{\omega}^{\lambda}_{1} 
= F_{\lambda}^{-1}\text{d}F_{\lambda} = \hat{U}_{1}^{\lambda} \text{d}z + \hat{V}_{1}^{\lambda} \text{d}\bar{z}, \quad 
\hat{\omega}^{\lambda}_{2} 
= F_{i\lambda}^{-1}\text{d}F_{i\lambda} = \hat{U}_{2}^{\lambda} \text{d}z + \hat{V}_{2}^{\lambda} \text{d}\bar{z}.
\end{equation}
Notice that $\hat{U}_{2}^{\lambda}$ (resp. $\hat{V}_{2}^{\lambda}$) can be obtained from $\hat{U}_{1}^{\lambda}$ (resp. $\hat{V}_{1}^{\lambda}$) by transforming $\lambda$ into $i\lambda$. From the above discussion, we obtain the following proposition.
\begin{Proposition}\label{prp:extendcorr}
Let $\hat {\mathcal F}_{\lambda}$ be the extended frame of $\hat{\mathfrak f}$. Then there exists a pair of maps $(F_{\lambda}, F_{i \lambda})$ such that 
 the Maurer-Cartan form $\hat{\omega}^{\lambda}_{1}$ of  $F_{\lambda}$ is given by \eqref{eq: Maurer-Cartan form tilde12}.  Conversely, 
 given a pair of maps $(F_{\lambda}, F_{i \lambda})$ such that the Maurer-Cartan form $\hat{\omega}^{\lambda}_{1}$ of $F_{\lambda}$ is given by \eqref{eq: Maurer-Cartan form tilde12}, there exists the extended frame $\hat {\mathcal F}_{\lambda}$ 
 of some minimal surface $\hat{\mathfrak f}$.
\end{Proposition}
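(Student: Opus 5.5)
The plan is to transport everything through the Lie algebra isomorphism \eqref{eq:psi}, applied pointwise in $\lambda$, so that it becomes an isomorphism of twisted loop algebras $\Lambda\mathfrak{so}(2,2)_\sigma \cong \Lambda\mathfrak{su}(1,1)\oplus\Lambda\mathfrak{su}(1,1)$. Concretely, I will fix the explicit isomorphism $\psi:\mathfrak{su}(1,1)\oplus\mathfrak{su}(1,1)\to\mathfrak{so}(2,2)$ (Yokota's, or equivalently the one induced by the action of $\SU\times\SU$ on the real $2\times2$ matrices with the determinant form). Then I will check by direct substitution that $\psi(\hat U_1^\lambda,\hat U_2^\lambda)=\hat{\mathcal U}^\lambda$ and $\psi(\hat V_1^\lambda,\hat V_2^\lambda)=\hat{\mathcal V}^\lambda$, using the expressions for $p,r,\hat q$ in terms of $\hat u,\hat\alpha$.

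If that substitution is awkward, I will instead compare with the gauged form $\tilde\omega^\lambda$ in \eqref{eq: tilde U}, \eqref{eq: tilde V}, which is already written in $\hat u,\hat\alpha$ and differs from $\hat\omega^\lambda$ only by an $\mathrm{SO}(2)\times\mathrm{SO}(2)$ gauge. That gauge corresponds to a diagonal $\U\times\U$ gauge on the $\mathfrak{su}(1,1)$ side.

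The key steps are as follows.

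\begin{enumerate}
\item Check that $\psi$ maps $\mathfrak k=\mathfrak{so}(2)\oplus\mathfrak{so}(2)$ to the diagonal parts, and maps $\mathfrak p$ to the off-diagonal parts, of both $\mathfrak{su}(1,1)$ factors. Then the $\lambda^{\pm1}$ grading is preserved, and the factor $i$ appearing in the second component reflects the fact that the two $\mathfrak{su}(1,1)$ summands see $\mathfrak p$ with a relative phase, i.e.\ $\lambda\mapsto i\lambda$.
\item Since $\psi$ is a Lie algebra isomorphism, the Maurer--Cartan equation $\mathrm d\hat\omega^\lambda+\tfrac12[\hat\omega^\lambda\wedge\hat\omega^\lambda]=0$ holds if and only if it holds for both components $\hat\omega^\lambda_1,\hat\omega^\lambda_2$. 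Moreover, the second equation is the first with $\lambda$ replaced by $i\lambda$, so it holds automatically for all $\lambda\in\mathbb S^1$ once the first does.
\item For the forward direction, integrate $\hat\omega^\lambda_1$ on the simply connected $\mathbb D$ with $F_\lambda(z_0)=\mathrm{Id}$. Equivalently, lift $\hat{\mathcal F}_\lambda$ through the double cover $\SU\times\SU\to\SO$; this lift exists because $\mathbb D$ is simply connected.
\item For the converse, start from $F_\lambda$ with Maurer--Cartan form \eqref{eq: Maurer-Cartan form tilde12}. Flatness for all $\lambda$ gives exactly \eqref{eq: sinh-Gordon equation} together with $\hat\alpha_{\bar z}=0$. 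Then $\hat\omega^\lambda_2$ is flat as well, and $\psi(\hat\omega^\lambda_1,\hat\omega^\lambda_2)=\hat\omega^\lambda$ is flat for all $\lambda$. Integrating and projecting, $(F_\lambda,F_{i\lambda})\mapsto\hat{\mathcal F}_\lambda$, gives a frame whose Maurer--Cartan form has the structure \eqref{lambda family}.
\item Finally, recover the surface as $\hat{\mathfrak f}=\tfrac{\sqrt2}{2}(\hat{\mathcal F}^1+i\hat{\mathcal F}^2)$, as in Section \ref{sbsc:family}. Here Theorem \ref{Thm1} (2)$\Rightarrow$(1) shows that $\hat{\mathfrak f}$ is a minimal Lagrangian surface, after checking that it is horizontal and conformal with $\langle\hat{\mathfrak f},\hat{\mathfrak f}\rangle=0$; these properties follow from the block shape of $\hat\omega^\lambda$.
\end{enumerate}

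The main obstacle is the explicit verification in step 1 that $\psi$ really sends the given pair to $\hat{\mathcal U}^\lambda,\hat{\mathcal V}^\lambda$, with the correct normalizations ($\tfrac14\hat u_z$ versus $\hat q$, and the factors $\tfrac1{\sqrt2}$). I would do it first via the gauged form $\tilde\omega^\lambda$, whose entries are simple monomials in $e^{\pm\hat u/2}$, $\hat\alpha$ and $\hat u_z$. A secondary subtlety is the $\boldsymbol Z_2$ in \eqref{eq:psi}: lifting frames from $\SO$ to $\SU\times\SU$ needs simple connectivity of $\mathbb D$, and the lift is defined only up to sign, which does not affect $\hat{\mathfrak f}$.
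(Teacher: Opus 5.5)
Your proposal is correct and follows the same route as the paper. The paper obtains the proposition directly from the preceding discussion: it reads off $(\hat U^\lambda_1,\hat U^\lambda_2)$ and $(\hat V^\lambda_1,\hat V^\lambda_2)$ from $\hat\omega^\lambda$ through the isomorphism \eqref{eq:psi} extended to twisted loop algebras, observes that the second factor is the first with $\lambda\mapsto i\lambda$, integrates on $\mathbb D$, and runs the identification backwards for the converse. It then recovers the surface via Theorem \ref{Thm corresponding}, where you use Theorem \ref{Thm1}; either works.

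One correction to your step 1: $\psi(\hat U^\lambda_1,\hat U^\lambda_2)$ equals neither $\hat{\mathcal U}^\lambda$ (the phase of $r$ is not constant) nor $\tilde{\mathcal U}^\lambda$. The gauge defining $\tilde{\mathcal U}^\lambda$ contains the $\lambda$-dependent rotation by $-\theta$, which shifts the powers to $\lambda^{0}$ and $\lambda^{-2}$. The correct target is the $\mathrm{d}z$-part of the Maurer--Cartan form of $\hat{\mathcal F}_\lambda\mathcal G_1$, i.e.\ the gauge of Section \ref{sbsc:family} with $\theta=0$. That gauge is $\lambda$-independent and $\mathrm{SO}(2)\times\mathrm{SO}(2)$-valued, so it preserves the twisted loop structure and makes the identification legitimate.
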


Now consider the map $\check{f}: M \to \mathbb{H}^{2}$ and the Maurer-Cartan form $\hat{\omega}^{\lambda}_{1}$ of the local lift $F_{\lambda}$.
This form coincides with the Maurer-Cartan form of the extended frame of a non-conformal harmonic map from $M$ into $\mathbb{H}^2$.
Hence $F_{\lambda}$ is naturally viewed as the \emph{extended frame} of the harmonic map $\check{f}$. In addition, let $F_{\lambda}$, $F_{i\lambda}$ be the solutions of the system \eqref{eq: Maurer-Cartan form tilde12} and define 
\begin{equation}\label{eq: fmin N}
   f_{max}:= F_{\lambda}\begin{pmatrix}
 e^{-\pi i/4} & 0 \\
 0 & e^{\pi i/4}
\end{pmatrix} F_{i\lambda}^{-1}, \quad N := iF_{\lambda} \begin{pmatrix}
 e^{-\pi i /4} & 0 \\
 0 & -e^{\pi i /4}
\end{pmatrix} F_{i\lambda}^{-1}.
\end{equation}
By the results in \cite{FI2000, O2017}, $f_{max}$ is a spacelike maximal surface in $\mathbb{H}^{3}_{1}$ and normal $N$.
 Moreover, $[f_{max}+i N]$ is a minimal Lagrangian surface in $Q_2^*$ by Theorem \ref{Thm corresponding}. In this way all minimal Lagrangian surfaces can be obtained by a harmonic map into $\mathbb H^2$.
\subsection{The DPW method }\label{Subsection the DPW method}\label{sbsc:DPW}
 Now we apply the generalized Weierstrass type representation (the DPW method)
for harmonic maps in $\mathbb H^2$, see basic construction in \cite{BRS2010, O2017}. Note that the extended frame $F_{\lambda}$ takes values in 
\[
\Lambda G_{\sigma} = \{g : \mathbb S^1  \to G \mid  \sigma g (\lambda) = g(-\lambda) \},
\]
where $G = \SU$ and $\sigma (g) = \operatorname{Ad} \operatorname{diag}(1, -1)  (g)$ is the involution associated with the symmetric space $\mathbb H^2 = \SU/\Uone$. With respect to a suitable Banach 
topology, $\Lambda G_{\sigma}$ is a Banach Lie group (the \textit{loop group} of 
$G$, see \cite{PS}). The \textit{Birkhoff decomposition} (\cite{PS, DPW}) of $F_{\lambda}$
\[
 F_{\lambda} = F_{-} F_{+}\quad \mbox{with $F_- \in \Lambda_*^- G^{\mathbb C}_{\sigma}$
 and $F_+\in \Lambda^+ G^{\mathbb C}_{\sigma}$}
\]
yields the meromorphic dependence of $F_{-}$ \cite[Lemma 2.6]{DPW}. 
Here $G^{\mathbb C} = \SL$ and $\Lambda^{\pm} G^{\mathbb C}_{\sigma}$ 
denote the subgroups of $\Lambda G^{\mathbb C}_{\sigma}$ which can be 
extended to the inside (resp. outside) of the unit disk in $\mathbb{C} P^1$ 
when the plus sign (resp. the minus sign) is chosen. The subscript $*$ denotes 
the identity normalization at $\lambda =0$.

Conversely, minimal Lagrangian surfaces in the complex hyperbolic quadric $Q_{2}^{*}$ can be constructed in the following four steps:
\begin{enumerate}
    \item[$\mathbf{1.}$] Solve the initial-value problem: 
    \begin{equation}
        \mathrm{d}\Phi = \Phi \xi, \quad \Phi(z_{0}) = \Phi_{0} \in \Lambda \SL_{\sigma},
    \end{equation}
    to obtain a unique map $\Phi : \mathbb D  \to \Lambda \SL_{\sigma}$.
    
    \item[$\mathbf{2.}$] Compute the \textit{Iwasawa decomposition} (see \cite{PS, BRS2010})  
    of $\Phi$ pointwise on $\mathbb D$:
    \begin{equation}\label{eq: F lambda}
        \Phi = F_{\lambda} B, \quad F_{\lambda} \in \Lambda \SU_{\sigma}, \quad B \in \Lambda^{+} \SL_{\sigma},
    \end{equation}
    Then, by \cite[Remark 3.4]{BRS2010}, $F_{\lambda}$ is the extended frame of a harmonic map into $\mathbb H^2$. Note that the Iwasawa decomposition is not global in general. So the decomposition here is just local around the base point $z_0 \in \mathbb D$, see 
    \cite{BRS2010}. Set the pair of maps given by another map $F_{i\lambda}$ as
    \begin{equation}
        \left( F_{\lambda}, F_{i\lambda} \right) \in \Lambda \SU_{\sigma}\times \Lambda \SU_{\sigma}.
    \end{equation}
    
    \item[$\mathbf{3.}$] 
    Using the loop group isomorphism 
    \begin{equation}\label{eq:loopisom}
        \Lambda \SO_{\sigma} \cong (\Lambda \SU_{\sigma} \times \Lambda \SU_{\sigma}) /\boldsymbol{Z}_2, 
    \end{equation}
    together with Proposition \ref{prp:extendcorr}, one obtains 
    the extended frame $\mathcal{F}_{\lambda} \in \Lambda \SO_{\sigma} $ of 
    some minimal Lagrangian immersion into $Q_2^{*}$. 
    
    \item[$\mathbf{4.}$] Finally, by using Proposition \ref{Prop} below, we obtain 
 a family of minimal Lagrangian immersions $f^{\lambda}$ into $Q_2^{*}$.
\end{enumerate}
\begin{Remark}
Although the involution $\sigma = \operatorname{Ad} \operatorname{diag} (1, 1, -1, -1)$ of $\SO$, which defines the twisted loop group $\Lambda \SO_{\sigma}$, differs from the involution $\operatorname{Ad} \operatorname{diag} (1, -1)$ of $\SU$ defining $\Lambda \SU_{\sigma}$, the homomorphism \eqref{eq:psi} extends naturally to the level of twisted loop groups as in \eqref{eq:loopisom}. The corresponding Maurer-Cartan forms are given in \eqref{eq:hatu} and \eqref{eq:hatv}.
\end{Remark}
In the following proposition and corollary, we will make use of the Pauli matrices
\begin{equation*}
    \sigma_{1} =  \begin{pmatrix} 0 & 1 \\
 1 & 0
\end{pmatrix},\quad \sigma_{2} = \begin{pmatrix}
 0 & -i \\
 i & 0
\end{pmatrix},\quad \sigma_{3} = \begin{pmatrix} 1 & 0 \\
 0 & -1
\end{pmatrix}. 
\end{equation*} 
\begin{Proposition}\label{Prop}
Let $F_{\lambda}$ be the extended frame defined above.
 Set 
 \begin{equation}\label{eq:Xlambda}
     X^{\lambda} = (X^{\lambda}_{ij}) := F_{\lambda} F^{-1}_{i\lambda}, \quad Y^{\lambda} = (Y^{\lambda}_{ij}) := i F_{\lambda} \sigma_{3} F_{i\lambda}^{-1}.
 \end{equation}
 Then the associated family $\{f^{\lambda} \}$ of a minimal Lagrangian surface in $Q_2^*$ 
 can be represented by 
     \begin{equation*}
     f^{\lambda}=
\left[\left(
\mathrm{Re} (X^{\lambda}_{11}) + i\mathrm{Re} (Y^{\lambda}_{11}),  
\mathrm{Im} (X^{\lambda}_{11}) + i\mathrm{Im} (Y^{\lambda}_{11}), 
\mathrm{Re} (X^{\lambda}_{21}) + i\mathrm{Re} (Y^{\lambda}_{21}), 
\mathrm{Im} (X^{\lambda}_{21}) + i\mathrm{Im} (Y^{\lambda}_{21})
\right)
\right].
\end{equation*}
where $\operatorname{Re}$ and $\operatorname{Im}$ denote the real and the imaginary part, respectively.
\end{Proposition}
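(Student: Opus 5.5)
The plan is to realize $\mathbb R^4_2$ as the space of $2\times 2$ complex matrices of the form $\begin{pmatrix} a & b\\ \bar b & \bar a\end{pmatrix}$ (or an equivalent real form), with quadratic form given by the determinant. In this model $\mathbb H^3_1$ is identified with $\SU$, with $\SU\times\SU$ acting by $(A,B)\cdot X = AXB^{-1}$. This is the model implicit in \eqref{eq: fmin N}. A point $X\in \SU$ is determined by its first column $(X_{11},X_{21})$, since $X_{22}=\bar X_{11}$ and $X_{12}=\bar X_{21}$. Under the identification the four real coordinates of $X$ are therefore $(\operatorname{Re}X_{11},\operatorname{Im}X_{11},\operatorname{Re}X_{21},\operatorname{Im}X_{21})$, and the quadratic form is $-|X_{11}|^2+|X_{21}|^2$ up to the sign convention matching $\eta=\di(-1,-1,1,1)$. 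I would check that this coordinate map intertwines the conjugation action with the isomorphism \eqref{eq:psi}, following the explicit form in \cite{Yokota}.

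The first step is to apply \eqref{eq: fmin N} to the extended frames $F_\lambda,F_{i\lambda}$ in place of $\lambda=1$. Because both frames satisfy the Maurer--Cartan equations \eqref{eq: Maurer-Cartan form tilde12} for every $\lambda\in\mathbb S^1$, the results of \cite{FI2000, O2017} give, for each $\lambda$, a spacelike maximal surface $f^\lambda_{max}$ in $\mathbb H^3_1$ with unit normal $N^\lambda$. I then absorb the constant diagonal matrices $\di(e^{-\pi i/4},e^{\pi i/4})$ and $i\,\di(e^{-\pi i/4},-e^{\pi i/4}) = i\,\di(e^{-\pi i/4},e^{\pi i/4})\sigma_3$. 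Right multiplication by a constant element of $\SU$ is an ambient isometry of $\mathbb H^3_1$, and it can be moved into $F_{i\lambda}$ by replacing $F_{i\lambda}$ with $F_{i\lambda}\,\di(e^{\pi i/4},e^{-\pi i/4})$, which changes the frame only by a constant. This reduces the pair to $X^\lambda=F_\lambda F_{i\lambda}^{-1}$ and $Y^\lambda=iF_\lambda\sigma_3F_{i\lambda}^{-1}$ as in \eqref{eq:Xlambda}, so the resulting family is congruent to the original one.

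The second step uses Theorem \ref{Thm corresponding}: the minimal Lagrangian surface is $[f_{max}+iN]$, with $\hat{\mathfrak f}=\tfrac{\sqrt2}{2}(f_{max}+iN)$. Writing $f_{max}=X^\lambda$ and $N=Y^\lambda$ in the real coordinates above and adding, the first coordinate is $\operatorname{Re}X^\lambda_{11}+i\operatorname{Re}Y^\lambda_{11}$, and similarly for the others. Since the point is projective, the factor $\sqrt2/2$ drops out. Finally, I need this family to be the associated family $f^\lambda$ of Theorem \ref{thm2}. This follows because the $\lambda$-dependence of $\hat\omega^\lambda$ in \eqref{eq:hatu}--\eqref{eq:hatv} comes from $\tilde\omega^\lambda$ through the gauge $\mathcal G_\lambda$. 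Since $\mathcal G_\lambda$ takes values in $\mathrm{SO}(2)\times\mathrm{SO}(2)$, it does not change the projected surface, and the first two columns, which give the lift, are unaffected by the block $A$.

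I expect the main obstacle to be the bookkeeping of the identification. Two points need care. First, $Y^\lambda=iF_\lambda\sigma_3F_{i\lambda}^{-1}$ must also lie in the same real form $\{\begin{pmatrix} a&b\\ \bar b&\bar a\end{pmatrix}\}$ and not in its negative or conjugate. Second, the order and signs of coordinates must match $\eta$ and the frame convention \eqref{eq: frame sigma}. To settle both, I would first verify them at a point where $F_\lambda=F_{i\lambda}=\id$, and then extend to all points by equivariance under the $\SU\times\SU$ action.
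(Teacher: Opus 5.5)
Your proposal is correct and is essentially the paper's argument. The paper's one-line proof (the isomorphism \eqref{eq:psi} plus ``a straightforward computation'') amounts to realizing $\R^4_2$ as the real span of $\SU$, with $(A,B)$ acting by $X\mapsto AXB^{-1}$. Then the first two columns of the $\SO$-frame $\psi(F_\lambda,F_{i\lambda})$ are $F_\lambda\,\id\,F_{i\lambda}^{-1}=X^\lambda$ and $F_\lambda (i\sigma_3) F_{i\lambda}^{-1}=Y^\lambda$, the lift is $\tfrac{\sqrt2}{2}(X^\lambda+iY^\lambda)$, and $\mathcal G_\lambda$ does not touch these columns. Your detour through \eqref{eq: fmin N}, absorbing the constant $\di(e^{-\pi i/4},e^{\pi i/4})$ as a congruence, is harmless but could be skipped by evaluating $\psi(F_\lambda,F_{i\lambda})$ directly on the basis vectors $\id$ and $i\sigma_3$, which also gives the identification with the family of Theorem \ref{thm2} without further argument.
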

\begin{proof}
By the Lie group isomorphism given in \eqref{eq:psi} and a straightforward computation, we obtain the result.
\end{proof}
Since $Q_2^*$ is isomorphic to $\mathbb H^2 \times \mathbb H^2$, we 
have a representation formula in $\mathbb H^2 \times \mathbb H^2$ 
by the extended frame $F_{\lambda}$.
\begin{Corollary}\label{coro:formula}
Let $F_{\lambda}$ be the extended frame defined above.
Define a map 
\begin{equation}
\Phi_{\lambda}= 
(\phi_{\lambda}, \psi_{\lambda})
: \mathbb D \to  \mathbb H^2 \times \mathbb H^2
\end{equation}
by $(\phi_{\lambda}, \psi_{\lambda})
:= (iF_{\lambda} \sigma_3 F_{\lambda}^{-1}, iF_{i\lambda} \sigma_3 F_{i\lambda}^{-1})$.
Then  $\{\Phi_{\lambda}\}_{\lambda \in \mathbb S^1}$ 
is a family of minimal Lagrangian surfaces.
\end{Corollary}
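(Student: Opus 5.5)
The plan is to realise $Q_2^*\cong\mathbb H^2\times\mathbb H^2$ through the splitting $\SO\cong(\SU\times\SU)/\boldsymbol{Z}_2$ of \eqref{eq:psi}, and to show that under this identification $f^{\lambda}$ of Proposition~\ref{Prop} corresponds exactly to $\Phi_\lambda$.

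First I will model $\mathbb H^2=\SU/\Uone$ as the orbit $\{iA\sigma_3A^{-1}: A\in\SU\}$. Under the isomorphism \eqref{eq:psi}, the subgroup ${\rm SO}(2)\times{\rm SO}(2)$ corresponds to $(\Uone\times\Uone)/\boldsymbol{Z}_2$. The symmetric space \eqref{eq:symQ2} therefore maps diffeomorphically and equivariantly onto $\mathbb H^2\times\mathbb H^2$ via
\[
(A,B)\mapsto (iA\sigma_3A^{-1},\, iB\sigma_3B^{-1}).
\]
One should check that the $\sigma$ on $\SO$ corresponds to $\operatorname{Ad}\operatorname{diag}(1,-1)$ on each factor, which is implicit in the Remark preceding Proposition~\ref{Prop}. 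With this in hand, the extended frame $\hat{\mathcal F}_\lambda$, which Proposition~\ref{prp:extendcorr} identifies with the pair $(F_\lambda,F_{i\lambda})$, projects to $\Phi_\lambda=(\phi_\lambda,\psi_\lambda)$. This is the same point of $Q_2^*$ as $f^\lambda$. Indeed, $f^\lambda$ is obtained from the first two columns of the frame, and those columns are $\operatorname{Ad}({\rm SO}(2)\times{\rm SO}(2))$-invariantly determined by the coset. The gauge $\mathcal G_\lambda$ lies in the isotropy group, so it does not change the projection.

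The cleanest way to make this rigorous without tracking the explicit isomorphism is to argue through harmonicity. For each fixed $\lambda\in\mathbb S^1$, the map $F_\lambda$ is the extended frame of a harmonic map into $\mathbb H^2$. So is $F_{i\lambda}$, because $i\lambda\in\mathbb S^1$ and its Maurer--Cartan form $\hat\omega_2^\lambda$ has the same $\lambda^{-1}\mathfrak p'+\mathfrak k+\lambda\mathfrak p''$ shape. Hence $\phi_\lambda$ and $\psi_\lambda$ are harmonic, and $\Phi_\lambda$ is harmonic into the product symmetric space $\mathbb H^2\times\mathbb H^2$.

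Conformality amounts to the vanishing of the sum of the Hopf differentials of the two factors. From \eqref{eq:hatu}, the $\mathfrak p'$-parts of $\hat U_1^\lambda$ and $\hat U_2^\lambda$ give Hopf differentials proportional to $\lambda^{-2}\hat\alpha$ and $(i\lambda)^{-2}\hat\alpha=-\lambda^{-2}\hat\alpha$ respectively, with the same normalising constant. Their sum is therefore zero, so $\Phi_\lambda$ is conformal. Being conformal and harmonic, it is a minimal immersion wherever it is immersive, and the metric $e^{\hat u}+|\hat\alpha|^2e^{-\hat u}>0$ guarantees that it is.

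For the Lagrangian property, the Kähler form on $\mathbb H^2\times\mathbb H^2$ is $\omega_1\oplus\omega_2$, the sum of the area forms with compatible orientations; the sign convention has to be the one induced from $\mathbb{CH}^3_1$. The pullback $\Phi_\lambda^*(\omega_1+\omega_2)$ is proportional to
\[
\bigl(|a_1|^2-|b_1|^2\bigr)+\bigl(|a_2|^2-|b_2|^2\bigr),
\]
where $a_j,b_j$ are the off-diagonal $\lambda^{-1}$ and $\lambda$ entries. From \eqref{eq:hatu}--\eqref{eq:hatv}, the first factor contributes $\tfrac12(e^{\hat u}-|\hat\alpha|^2e^{-\hat u})$. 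The second contributes the same quantity but with the sign fixed by the opposite orientation convention in the identification. The pullback therefore vanishes.

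The main obstacle is exactly this orientation and sign bookkeeping: identifying which combination of area forms is the Kähler form of $Q_2^*$ under the isomorphism. I would settle it by checking against Proposition~\ref{Prop} at $\lambda=1$, where Lagrangianity of $f^{1}=f$ is already known. Alternatively, one can invoke the equivariant identification of the previous paragraphs, which reduces the whole statement to Proposition~\ref{Prop} and Theorem~\ref{thm2}.
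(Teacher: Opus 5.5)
Your argument is correct in substance but takes a different route from the paper's.

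\textbf{How the two proofs differ.} The paper argues by brute force with the trace form. It checks $\langle\phi_\lambda,\phi_\lambda\rangle=\langle\psi_\lambda,\psi_\lambda\rangle=-1$ and computes the identities \eqref{eq: metric H2H2}: equal energy densities $2e^u$ and $\langle(\Phi_\lambda)_z,(\Phi_\lambda)_z\rangle=0$. It then verifies $(\phi_\lambda)_{z\bar z}=2e^u\phi_\lambda$ and $(\psi_\lambda)_{z\bar z}=2e^u\psi_\lambda$ to get harmonicity. You give two alternatives.
\begin{itemize}
\item[(a)] You reduce everything to Theorem~\ref{thm2} and Proposition~\ref{Prop} via the equivariant identification $\SO/(\mathrm{SO}(2)\times\mathrm{SO}(2))\cong\mathbb H^2\times\mathbb H^2$.
\item[(b)] You argue structurally. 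Since $\hat U_2^\lambda=\hat U_1^{i\lambda}$ and $\hat V_2^\lambda=\hat V_1^{i\lambda}$, you have $\psi_\lambda=\phi_{i\lambda}$ with $i\lambda\in\mathbb S^1$. Harmonicity of both factors then comes for free from the loop-group characterization. Conformality is the cancellation of the Hopf differentials: with the form $\tfrac12\operatorname{tr}(XY)$ these are exactly $2\lambda^{-2}\hat\alpha$ and $2(i\lambda)^{-2}\hat\alpha=-2\lambda^{-2}\hat\alpha$.
\end{itemize}
Your version explains why pairing $\lambda$ with $i\lambda$ produces a conformal surface, which the paper's computation leaves implicit. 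The paper's version is self-contained and never needs to know how the K\"ahler form of $Q_2^*$ transports to the product.

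\textbf{The Lagrangian step as written is wrong.} Both factors use the same model $iA\sigma_3A^{-1}$ and the same area form $\omega_0$. In that model the two Jacobians \emph{coincide}. Both $\phi_\lambda^*\omega_0$ and $\psi_\lambda^*\omega_0$ are the same multiple of
\[
\tfrac12\bigl(e^{\hat u}-|\hat\alpha|^2e^{-\hat u}\bigr)=\hat\beta,
\]
which follows from $e^{\hat u}=e^u+\hat\beta$ and is independent of $\lambda$. Consequently:
\begin{itemize}
\item $\Phi_\lambda^*(\pi_1^*\omega_0+\pi_2^*\omega_0)$ is a nonzero multiple of $\hat\beta$, not zero.
\item What vanishes is $\Phi_\lambda^*(\pi_1^*\omega_0-\pi_2^*\omega_0)$. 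This matches the convention $\phi^*\omega_0=\psi^*\omega_0$ used in Appendix~\ref{app: CU}.
\end{itemize}
State the step as ``the two Jacobians are equal,'' not as a sum that vanishes because of an ``opposite orientation convention.''

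\textbf{Your proposed fix works.} Comparing with $f=f^1$ through the identification pins down the relevant K\"ahler form. Because the Jacobians do not depend on $\lambda$, that single check covers the whole family. The same K\"ahler-form (and equal-scaling) input is what your reduction (a) relies on.

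This part of your argument also supplies something the paper's proof skips. The identities \eqref{eq: metric H2H2} fix $|\partial\phi|^2+|\bar\partial\phi|^2$ and $|\partial\phi|\,|\bar\partial\phi|$ for each factor. They therefore only give $|\phi_\lambda^*\omega_0|=|\psi_\lambda^*\omega_0|$, not the sign relation that the Lagrangian condition requires.
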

\begin{proof}
By the definition of the maps $\phi_{\lambda}$ and $\psi_{\lambda}$, 
it is easy to check $\phi_{\lambda}, \psi_{\lambda} \in \mathfrak{su}(1,1) \cong \mathbb{R}^{3}_{1}$ for each $\lambda \in \mathbb{S}^{1}$. Under this identification, we have $\la \phi_{\lambda}, \phi_{\lambda}  \ra = -(1/2) \mathrm{trace}( \phi_{\lambda} \sigma_{2} \phi_{\lambda}^{T} \sigma_{2} ) = -1$, 
and similarly for $\psi_{\lambda}$, i.e. $\phi_{\lambda}, \psi_{\lambda} \in \mathbb{H}^{2}$. Moreover, a straightforward computation shows that 
\begin{equation}\label{eq: metric H2H2}
    \la (\phi_{\lambda})_{\bar z},  (\phi_{\lambda})_{z} \ra = 
 \la (\psi_{\lambda})_{\bar z},  (\psi_{\lambda})_{z} \ra = 2 e^{u}, \quad
        \la (\Phi_{\lambda})_{\bar z}, (\Phi_{\lambda})_{z}\ra = 4e^{u}, \quad
        \left\langle (\Phi_{\lambda})_{z}, (\Phi_{\lambda})_{z}  \right \rangle = 0    
\end{equation} 
hold. That is, $\Phi_{\lambda}$ is a conformal immersion, and moreover,  it is Lagrangian.
Another straightforward computation shows that $(\phi_{\lambda})_{z\bar{z}} = 2e^{u} \phi_{\lambda}$ and   $(\psi_{\lambda})_{z\bar{z}} = 2e^{u} \psi_{\lambda}$ hold.
Thus $(\Phi_{\lambda})_{z \bar z} = 2e^{u} \Phi_{\lambda}$ holds, and $\Phi_{\lambda}$ is harmonic, and 
it is minimal.\footnote{This can also be seen from \eqref{eq: GVWX 1} in Appendix \ref{app: CU}.}
\end{proof}

\section{Examples through the DPW method}\label{sc:Ex}
By using the DPW method introduced in Section \ref{sc:DPW}, we will construct the following types of minimal Lagrangian surfaces in $Q_2^{*}$. Again note that the Iwasawa decomposition is not global in general. So the examples given here are local around some base point $z_0 \in \mathbb D$.
\subsection{Basic examples}
The basic examples are the open part of the diagonal surface and the product of geodesics, see also \cite{GVWX}. 
\subsubsection{Open part of the diagonal surface}
Define
\begin{equation*}
    \xi := \lambda^{-1} \begin{pmatrix}
 0 & 1\\
 0 & 0
\end{pmatrix}\text{d}z
\end{equation*}
for $z \in \mathbb{C}$. 
The solution of \(d\Phi=\Phi\xi\) with \(\Phi(0)=\id\) is
$\Phi=\exp(z\,\xi/dz)$.
 Moreover,  the Iwasawa decomposition of $\Phi = F_{\lambda} B$ is given by 
\begin{equation*}
    F_{\lambda} = \frac{1}{\sqrt{1 - |z|^{2}}}\begin{pmatrix}
 1 & z\lambda^{-1} \\
 \bar{z}\lambda & 1
\end{pmatrix}. 
\end{equation*}
By Proposition \ref{Prop}, we obtain a family of open parts of the diagonal surface $\{ f^{\lambda} \}$ parameterized by $\lambda \in \mathbb{S}^{1}$: 
\begin{equation*}
     f^{\lambda}=
\left[\left(
1 - i|z|^{2},~  
-|z|^{2} + i,~ 
z\lambda^{-1} - i\bar{z}\lambda,~ 
-\bar{z}\lambda + iz\lambda^{-1}
\right)
\right].
\end{equation*}

\subsubsection{Product of geodesics}
Define
\begin{equation*}
    \xi := \lambda^{-1} \begin{pmatrix}
 0 & 1\\
 1 & 0
\end{pmatrix}\text{d}z
\end{equation*}
for $z \in \mathbb{C}$. It is easy to solve the ODE $\mathrm{d} \Phi = \Phi \xi$ by $\Phi = \exp ( z \xi/ \text{d}z)$ with $\Phi (0)=\id$.
 Moreover,  the Iwasawa decomposition of $\Phi = F_{\lambda} B$ is given by 
\begin{equation*}
    F_{\lambda} = \begin{pmatrix}
 \cosh (\lambda^{-1}z + \bar{z}\lambda) & \sinh (\lambda^{-1}z + \bar{z}\lambda)\\
 \sinh (\lambda^{-1}z + \bar{z}\lambda) & \cosh (\lambda^{-1}z + \bar{z}\lambda)
\end{pmatrix}.
\end{equation*}
By Proposition \ref{Prop}, we obtain a family of products of geodesics $\{ f^{\lambda} \}$ parameterized by $\lambda \in \mathbb{S}^{1}$: 
\begin{equation*}
     f^{\lambda}=
\left[\left(
\cos  s ,~  
i\cos  t ,~ 
i\sin  s ,~ 
\sin  t 
\right)
\right],
\end{equation*}
where
\begin{equation*}
    s= \lambda^{-1}z - \bar{z}\lambda - i(\lambda^{-1}z + \bar{z}\lambda), \quad t= \lambda^{-1}z - \bar{z}\lambda + i(\lambda^{-1}z + \bar{z}\lambda).
\end{equation*}

\subsection{Equivariant and radially symmetric examples}\label{sbsc:Equ}
 We now show two new examples of minimal Lagrangian surfaces in $Q_2^{*}$.
\subsubsection{$\mathbb R$-equivariant minimal Lagrangian surfaces}\label{subsub:equiv}
\begin{Definition}[$\mathbb R$-equivariant potentials, \cite{BRS2010}]
 Define 
 \begin{equation}\label{eq: Delaunay potential}
    \xi = A(\lambda) \, \mathrm{d} z, \quad \text{where} \quad A(\lambda) = \begin{pmatrix}
 c & a\lambda^{-1} + b\lambda \\
 -a\lambda - b\lambda^{-1} & -c
\end{pmatrix}, 
\end{equation}
with $a, b \in \R^{*}$ and $c \in \R$.  
We call such potentials the \textit{equivariant potentials}.
\end{Definition}
It is easy to see that  $\Phi = \exp ( z \cdot A)$ is the unique solution of $\mathrm{d} \Phi = \Phi \xi$ 
 with the initial condition  $\Phi(0) = \id$. Let $\Phi = F_{\lambda} B $ be the Iwasawa decomposition of $\Phi$ (see below for the explicit form of $F_{\lambda}$). 
 As discussed in \cite[Section 5.1]{BRS2010}, the transformation in the imaginary part $z \to z + i\theta$ leads to
\begin{equation}
    \gamma_{\theta} : z = x + iy \longmapsto x + i(y + \theta).
\end{equation}
Then the following transformation rule of $F_{\lambda}$ follows: 
\begin{equation}\label{eq: gamma Flambda}
    \gamma^{*}_{\theta} F_{\lambda} =  F_{\lambda}\left( z+i \theta , \, \bar{z}-i\theta , \, \lambda \right) = \exp \left( i\theta A (\lambda) \right) \cdot F_{\lambda}\left( z ,\, \bar{z} ,\, \lambda \right).
\end{equation}
 Note that $i \theta A (\lambda)$ takes values in $\Lambda \mathfrak{su} (1,1)_{\sigma}$ and thus 
 $\exp \left( i\theta A (\lambda) \right)$ takes values in $\Lambda \mathrm{SU} (1,1)_{\sigma}$.

  The general definition of an equivariant surface can be found in \cite{BRS2010}, then a straightforward computation shows that the minimal Lagrangian surface constructed 
by the equivariant potential $\xi$ in \eqref{eq: Delaunay potential} is an equivariant surface.
\begin{Proposition}\label{Prop: Requivariant}
    Let $\xi$ be an $\mathbb R$-equivariant potential defined in \eqref{eq: Delaunay potential}
    and let $F_{\lambda} \in \Lambda \SU_{\sigma}$ for $\lambda \in \mathbb{S}^{1}$ be the corresponding
    extended frame.
    Then the minimal Lagrangian surface $f^{\lambda} : M \to Q_{2}^{*}$ constructed by $\left( F_{\lambda }, F_{i\lambda} \right)$ is equivariant, that is,
    \[
     \hat {\mathfrak f}^{\lambda}\left( z + i\theta , \, \bar{z} - i\theta , \, \lambda \right)=
     \psi\big(
      ( 
     \exp (i\theta A(\lambda)) , \exp(-i\theta A(i\lambda)) 
      )
      \big) \, \hat{\mathfrak f}^{\lambda}(z, \bar{z}, \lambda) 
    \]
    holds, where $\hat {\mathfrak f}^{\lambda}$ is the horizontal lift of $f^{\lambda}$ and 
    $\psi : \Lambda \SU_\sigma \times \Lambda \SU_\sigma \to \Lambda \SO_\sigma$ is the loop group homomorphism.
\end{Proposition}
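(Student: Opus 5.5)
The plan is to start from the transformation rule \eqref{eq: gamma Flambda} for $F_\lambda$ and derive the companion rule for $F_{i\lambda}$. The map $\gamma^*_\theta F_\lambda$ is obtained from the Iwasawa decomposition of $\gamma_\theta^*\Phi = \exp((z+i\theta)A(\lambda)) = \exp(i\theta A(\lambda))\Phi$. Since $\exp(i\theta A(\lambda))\in\Lambda\SU_\sigma$, uniqueness of the Iwasawa decomposition (normalized as in \cite{BRS2010}) gives $\gamma_\theta^*F_\lambda = \exp(i\theta A(\lambda))F_\lambda$ and $\gamma_\theta^*B=B$. Because this identity holds for every $\lambda\in\mathbb S^1$, replacing $\lambda$ by $i\lambda$ yields
\[
\gamma_\theta^*F_{i\lambda}=\exp(i\theta A(i\lambda))\,F_{i\lambda}.
\]
One point needs checking: $i\theta A(i\lambda)$ must take values in $\mathfrak{su}(1,1)$ for $|\lambda|=1$. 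This holds because $A(i\lambda)$ has off-diagonal entries $ia\lambda^{-1}-ib\lambda$ and $-ia\lambda+ib\lambda^{-1}$. Their pattern under conjugation still satisfies the $\mathfrak{su}(1,1)$ reality condition, but with $a,b$ effectively replaced by $ia,-ib$ up to sign. I will verify this directly.

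Next I transfer these rules to the $\SO$ side. By Proposition \ref{prp:extendcorr} and the isomorphism \eqref{eq:loopisom}, the extended frame is $\hat{\mathcal F}_\lambda=\psi(F_\lambda,F_{i\lambda})$. Since $\psi$ is a homomorphism, the two rules give
\[
\gamma_\theta^*\hat{\mathcal F}_\lambda=\psi(g_1,g_2)\,\hat{\mathcal F}_\lambda,
\]
where $g_1=\exp(i\theta A(\lambda))$ and $g_2$ is the factor for $F_{i\lambda}$. Here I must match the convention of the statement. The statement writes $\exp(-i\theta A(i\lambda))$, so the isomorphism \eqref{eq:psi} must act on the second factor through an inverse or conjugate. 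This is visible in Proposition \ref{Prop}, where $F_{i\lambda}$ enters as $F_{i\lambda}^{-1}$ (or its transpose inverse) in $X^\lambda=F_\lambda F_{i\lambda}^{-1}$. Concretely, $X^\lambda\mapsto g_1X^\lambda g_2^{-1}$, and the statement's $\psi$ evaluated on $(g_1,\exp(-i\theta A(i\lambda)))$ encodes exactly this left–right action. I will fix the identification $\mathbb R^4_2\cong$ the $2\times2$ matrices of Proposition \ref{Prop}, on which $(h_1,h_2)$ acts by $X\mapsto h_1 X h_2^{-1}$, and read off the sign accordingly.

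Finally, the horizontal lift is $\hat{\mathfrak f}^\lambda=\tfrac{\sqrt2}{2}(\hat{\mathcal F}^1_\lambda+i\hat{\mathcal F}^2_\lambda)$. Equivalently, by Proposition \ref{Prop}, its components are linear in the entries of $X^\lambda$ and $Y^\lambda=iF_\lambda\sigma_3F_{i\lambda}^{-1}$. Both $X^\lambda$ and $Y^\lambda$ transform as $X\mapsto g_1Xg_2^{-1}$, and $\psi(g_1,g_2)$ is a real linear map of $\R^4_2$ (extended complex-linearly), so it commutes with taking real parts, imaginary parts, and complex combinations. Multiplying the column relation by $\psi(g_1,g_2)$ therefore yields the claimed identity. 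No extra $\mathrm{SO}(2)\times\mathrm{SO}(2)$ gauge appears, since $\gamma_\theta^*B=B$ and the gauge $\mathcal G_\lambda$ depends only on $\arg r$, which is $\theta$-invariant because $\hat u$ is.

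I expect the main obstacle to be bookkeeping rather than any conceptual step. The delicate parts are verifying that $\exp(i\theta A(i\lambda))$ lies in the real form, so that the $F_{i\lambda}$ factor stays in $\Lambda\SU_\sigma$, and pinning down the exact convention of $\psi$ that produces the minus sign in $\exp(-i\theta A(i\lambda))$. For the latter I would first compute $\psi$ explicitly on generators via the $X^\lambda,Y^\lambda$ formulas of Proposition \ref{Prop}.
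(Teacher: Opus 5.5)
Your argument is correct and follows the same route as the paper, which simply cites \eqref{eq: gamma Flambda} and a ``straightforward computation'': obtain the rule for $F_\lambda$ from uniqueness of the normalized Iwasawa splitting, substitute $\lambda\mapsto i\lambda$, push the rule through $\psi$, and read off the first two columns. On the sign, keep one convention throughout. If $\psi$ is the genuine homomorphism $X\mapsto h_1Xh_2^{-1}$, your computation gives $\psi(\exp(i\theta A(\lambda)),\exp(+i\theta A(i\lambda)))$. The statement's $\exp(-i\theta A(i\lambda))$ is just the right-hand factor in $\gamma_\theta^*X^\lambda=\exp(i\theta A(\lambda))\,X^\lambda\exp(-i\theta A(i\lambda))$, so it matches only if $\psi(h_1,h_2)$ is read as the action $X\mapsto h_1Xh_2$.
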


By Theorem 5.1 of \cite{BRS2010}, we obtain the explicit form of $F_{\lambda}$ as
\begin{equation*}
   F_{\lambda} = \begin{pmatrix}
 \frac{4ab\lambda^{2}+v^{2}}{t\sqrt{2v(a\lambda^{2} + b)(4ab \lambda^{2} + v^{2})}} \left( t\cosh \hat{t} + c\lambda \sqrt{-t}\sinh \hat{t} \right) & \frac{-\lambda \sqrt{-t} (2cv + v')\cosh \hat{t} + (-2tv + c\lambda^{2}v')\sinh \hat{t}}{\sqrt{-2t}\sqrt{v(a\lambda^{2} + b)(4ab\lambda^{2} + v^{2})}} \\
 \frac{\sqrt{v(a\lambda^{2} + b)(4ab \lambda^{2} + v^{2})}}{v\sqrt{-2t}} \sinh \hat{t} & \frac{v(a\lambda^{2}+b)}{\sqrt{2v(a\lambda^{2} + b)(4ab \lambda^{2} + v^{2})}}\left( 2\cosh \hat{t} - \frac{\lambda v'}{v\sqrt{-t}}\sinh \hat{t} \right)
\end{pmatrix},
\end{equation*}
where $\mathbf{f}$, $t$ and $\hat{t}$ are given by 
\begin{equation*}
    \mathbf{f}(x) = \int^{x}_{0} \frac{2\text{d} s}{1 + (4ab\lambda^2)^{-1}v^{2}(s)}, \quad
    t = ab + (a^{2} + b^{2} - c^{2})\lambda^{2} + ab \lambda^{4}, \quad \hat{t} = \sqrt{-t}~(\mathbf{f} - z)\lambda^{-1}.
\end{equation*}
We choose $\kappa_{1},\kappa_{2}$ so that $x \in (-\kappa_{1}^{2}, \kappa_{2}^{2})$ is the largest interval for which a solution $v = v(x)$ of 
\begin{equation*}
    (v')^{2} = (v^{2} - 4a^{2})(v^{2} - 4b^{2}) + 4c^{2}v^{2}, \quad  v''=2v(v^{2}-2a^{2}-2b^{2}+2c^{2}), \quad v(0) = 2b,
\end{equation*}
is finite and never zero ($'$ denotes $\frac{\mathrm{d}}{\mathrm{d}x}$). When $c\ne 0$, we require $v'(0)$ and $-bc$ to have the same sign. 
    Furthermore, one can write the solution $v(x)$ in the form 
    \begin{equation}\label{eq: solution v}
    v(x) = \sqrt{Z_{1}} \operatorname{sn}\left(\sqrt{Z_{2}}(x - x_{0}), \sqrt{Z_{1}/Z_{2}} \right) \quad \text{with}~ x \in (-\kappa_{1}^{2}, \kappa_{2}^{2}),
\end{equation}
where $\sqrt{Z_{1}/Z_{2}}$ is the modulus, $x_{0}$ can be obtained by initial condition $v(0) = 2b$ and
\begin{align*}
    Z_{1} &= 2(a^{2} + b^{2} - c^{2}) - 2\sqrt{(a^{2} + b^{2} - c^{2})^{2} - 4a^{2}b^{2}}, \\
    Z_{2} &= 2(a^{2} + b^{2} - c^{2}) + 2\sqrt{(a^{2} + b^{2} - c^{2})^{2} - 4a^{2}b^{2}}. 
\end{align*}
It is also easy to compute the map $F_{i\lambda}$. Then by Proposition \ref{Prop}, we obtain the explicit form of this equivariant surface in $Q_{2}^{*}$. We now give the sufficient conditions such that $f^{\lambda}$ is well-defined \red{on the quotient of $\Sigma := \{ z = x+iy\in \mathbb{C} | -\kappa_{1}^{2} < x < \kappa_{2}^{2} \}$ by the period \(z\mapsto z+2\pi i\).} 
In this paper, we call a complete equivariant minimal Lagrangian annulus
in \(\mathbb H^2\times \mathbb H^2\) \textit{catenoid-type} if it has two complete annular ends and
its two projections to the \(\mathbb H^2\)-factors are rotationally symmetric.
\begin{Theorem}[Catenoid-type examples]\label{prp:closing}
 Let $f^{\lambda}$ be the \(\mathbb R\)-equivariant minimal Lagrangian surface given in 
 {\rm Proposition \ref{Prop: Requivariant}}. Then \(f^{\lambda}|_{\lambda=\lambda_0}\), where \(\lambda_0\in \mathbb S^1\),
 is well-defined on the quotient of \(\Sigma\) by the period
 \(z\mapsto z+2\pi i\) if the eigenvalues of \(A(\lambda_0)\) and
\(A(i\lambda_0)\) are non-zero half-integers (in particular, non-zero
integers). Equivalently, the closing condition is
\begin{equation}\label{eq: well define condition}
    2 \sqrt{c^{2} - |\lambda_0 a + \lambda_0^{-1}b|^{2}}, \quad
    2 \sqrt{ c^{2} - |\lambda_0 a - \lambda_0^{-1}b|^{2}}
    \in  \mathbb Z_{> 0}.
\end{equation}
Moreover, such \(f^{\lambda}|_{\lambda=\lambda_0}\) descends to a complete
catenoid-type annulus with two complete annular ends.
\end{Theorem}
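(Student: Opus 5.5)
The plan is to read the monodromy of the construction off Proposition~\ref{Prop: Requivariant} and make it trivial. By \eqref{eq: gamma Flambda}, translating by $z\mapsto z+2\pi i$ changes $F_{\lambda_0}$ by left multiplication with $M_1:=\exp(2\pi i A(\lambda_0))$. Similarly, $F_{i\lambda_0}$ is changed by $M_2:=\exp(2\pi i A(i\lambda_0))$. Since $f^{\lambda_0}$ is built from $X^{\lambda_0}=F_{\lambda_0}F_{i\lambda_0}^{-1}$ and $Y^{\lambda_0}=iF_{\lambda_0}\sigma_3F_{i\lambda_0}^{-1}$ (Proposition~\ref{Prop}), the horizontal lift transforms by $X\mapsto M_1 X M_2^{-1}$. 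This is precisely the action of $\psi(M_1,M_2)$ appearing in Proposition~\ref{Prop: Requivariant}. So it suffices to show that $(M_1,M_2)$ is $\pm(\id,\id)$. The pair $(-\id,-\id)$ lies in the kernel $\boldsymbol Z_2$ of \eqref{eq:loopisom}, and $[\,\cdot\,]$ kills an overall sign anyway.

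To get the eigenvalue condition, note that $A$ is trace-free, so $A^2=-\det A\cdot\id$. A direct computation gives
\[
\det A(\lambda)=-c^2+(a\lambda^{-1}+b\lambda)(a\lambda+b\lambda^{-1}).
\]
For $|\lambda_0|=1$ we have $a\lambda_0+b\lambda_0^{-1}=\overline{a\lambda_0^{-1}+b\lambda_0}$, so $\det A(\lambda_0)=|\lambda_0 a+\lambda_0^{-1}b|^2-c^2$ up to relabeling. The eigenvalues are therefore $\pm\mu_1$ with
\[
\mu_1=\sqrt{c^2-|\lambda_0a+\lambda_0^{-1}b|^2}.
\]
Replacing $\lambda_0$ by $i\lambda_0$ flips the sign of the $b$-term and gives $\mu_2=\sqrt{c^2-|\lambda_0a-\lambda_0^{-1}b|^2}$. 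When $\mu_j\neq0$, the matrix $A$ is diagonalizable, and $\exp(2\pi i A)$ has eigenvalues $e^{\pm 2\pi i\mu_j}$. This equals $\pm\id$ exactly when $2\mu_j\in\mathbb Z$. We must also check that the signs for $j=1,2$ agree, or else handle the mismatch: $(\id,-\id)$ acts on $X$ as $-X$ and still projects trivially in $Q_2^*$, since $Y$ flips sign as well. So the projective class is preserved in either case, and this yields \eqref{eq: well define condition}. Nonzero-ness is needed to exclude the nilpotent case, in which $\exp$ is unipotent rather than $\pm\id$.

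For the catenoid-type assertion, I would use Corollary~\ref{coro:formula}. The two projections are $\phi=iF_{\lambda_0}\sigma_3F_{\lambda_0}^{-1}$ and $\psi=iF_{i\lambda_0}\sigma_3F_{i\lambda_0}^{-1}$. Under the translation they transform by conjugation with the one-parameter groups $\exp(i\theta A(\lambda_0))$ and $\exp(i\theta A(i\lambda_0))$. When $\mu_j$ is real and nonzero, these groups are elliptic, i.e.\ rotations of $\mathbb H^2$ about a fixed point, closing at $\theta=2\pi$. Hence each projection is rotationally symmetric. Completeness and the two ends come from the $x$-direction. The metric \eqref{eq: metric} depends only on $x$ through $v$ and $\mathbf f$. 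As $x\to-\kappa_1^2$ or $x\to\kappa_2^2$, the function $v$ blows up (by the $\operatorname{sn}$ form \eqref{eq: solution v}, it approaches a pole), and I would show that $\int e^{u/2}\,dx$ diverges at both endpoints.

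I expect the main obstacle to be this completeness and end analysis, which requires controlling the growth of $e^{\hat u}$ near the poles of $v$. The closing condition itself is essentially linear algebra.
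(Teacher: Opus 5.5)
Your closing-condition argument (eigenvalues $\pm\mu_j$ from $\det A$, monodromy $\exp(2\pi i A)=\pm\mathrm{Id}$, with a sign mismatch harmless projectively) matches the paper's and is fine. So does the rotational-symmetry argument via elliptic one-parameter groups.

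The gap is in the completeness part, which rests on a false premise. You assert that $v$ approaches a pole at both endpoints of $(-\kappa_1^2,\kappa_2^2)$. But that interval is the maximal one on which $v$ is finite \emph{and non-zero}, and under the closing condition one endpoint is always a zero of $v$. Write $\lambda_0=e^{i\theta_0}$. The two inequalities $c^2>|\lambda_0a\pm\lambda_0^{-1}b|^2=a^2+b^2\pm 2ab\cos 2\theta_0$ add up to $c^2>a^2+b^2$, hence
\[
(v')^2=v^4+4(c^2-a^2-b^2)v^2+16a^2b^2\ \ge\ 16a^2b^2>0 .
\]
So $v$ is strictly monotone. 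Starting from $v(0)=2b$, it blows up in finite time on one side. On the other side $|v|$ decreases with $|v'|\ge 4|ab|$ and reaches $0$ at a finite point. Your plan of controlling the growth of $e^{\hat u}$ near poles therefore covers only one of the two ends.

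What is missing is the explicit metric in terms of $v$, namely $2e^u=\tfrac12\bigl(v^2+16|ab|^2v^{-2}\bigr)$, obtained from \eqref{eq: metric}, Corollary~5.8 in \cite{BRS2010} and the Lawson correspondence. You then need a separate argument at the zero end. There the $v^2$ part of the metric degenerates, and completeness comes entirely from the term $16|ab|^2v^{-2}$, i.e.\ from $ab\neq 0$ (non-vanishing Hopf differential). Since $(v')^2\to 16a^2b^2$, $v$ vanishes only linearly, so $|v(x)|\le C|x-x_0|$ and $\int |v|^{-1}\,dx$ diverges logarithmically.

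At the pole end you still have to supply the estimate $|v'|\le C|v|^2$ for large $|v|$. This gives $\int |v|\,dx\ge C^{-1}\int d|v|/|v|=\infty$. The paper's proof makes exactly this case split: $v\to 0$, $|v|\to\infty$, and an infinite endpoint. The last case cannot actually occur in the closing regime, since both endpoints are finite there.
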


\begin{proof}
Set
\[
m=
2 \sqrt{c^{2} - |\lambda_0 a + \lambda_0^{-1}b|^{2}},
\qquad
n=
2 \sqrt{c^{2} - |\lambda_0 a - \lambda_0^{-1}b|^{2}}.
\]
A direct computation of the characteristic polynomial of \(A(\lambda)\) in
\eqref{eq: Delaunay potential} shows that the eigenvalues of \(A(\lambda_0)\)
and \(A(i\lambda_0)\) are
\[
\pm \frac{m}{2},\qquad \pm \frac{n}{2},
\]
respectively. Hence \(\pm m/2\) and \(\pm n/2\) are non-zero half-integers
(integers or genuine half-integers) if and only if
\(m,n\in\mathbb Z_{>0}\), which is precisely
\eqref{eq: well define condition}.

By Proposition \ref{Prop: Requivariant}, the action of the period
\(z\mapsto z+2\pi i\) is governed by the monodromy matrices
\[
\exp(2\pi i A(\lambda_0)),\qquad
\exp(2\pi i A(i\lambda_0)).
\]
Since the eigenvalues of \(A(\lambda_0)\) and \(A(i\lambda_0)\) are
\(\pm m/2\) and \(\pm n/2\) with \(m,n\in\mathbb Z_{>0}\), we have
\[
\exp(2\pi i A(\lambda_0))=(-1)^m\id,
\qquad
\exp(2\pi i A(i\lambda_0))=(-1)^n\id ,
\]
where we used \(\exp(2\pi i\cdot(\pm m/2))=e^{\pm\pi i m}=(-1)^m\) (and
analogously for \(n\)). Thus, if \(m\equiv n \pmod 2\), the horizontal lift is
preserved by the period, whereas if \(m\not\equiv n \pmod 2\), the horizontal
lift changes sign. In both cases the projective point in \(Q_2^*\) is
unchanged, and hence \(f^{\lambda}|_{\lambda=\lambda_0}\) descends to the
quotient of \(\Sigma\) by \(z\mapsto z+2\pi i\). This proves the first claim.

We next explain why the descended surface is of catenoid type. The conditions
\eqref{eq: well define condition} imply
\[
|\lambda_0 a+\lambda_0^{-1}b|^{2}-c^{2}<0,
\qquad
|\lambda_0 a-\lambda_0^{-1}b|^{2}-c^{2}<0.
\]
By Corollary 5.3 in \cite{BRS2010}, the corresponding spacelike rotational
CMC surfaces in \(\mathbb R^3_1\) associated with the two factors have timelike
axes. Therefore, for the well-defined \(\mathbb R\)-equivariant minimal
Lagrangian surface \(f^{\lambda}|_{\lambda=\lambda_0}\) in
\(\mathbb H^2\times \mathbb H^2\), the two projections onto the
\(\mathbb H^2\)-factors are precisely the Gauss maps of spacelike rotational
CMC surfaces with timelike axes.

By \eqref{eq: gamma Flambda}, we have
\begin{equation}\label{eq: gamma Flambda closing}
    \gamma^{*}_{\theta} F_{\lambda}|_{\lambda = \lambda_{0}}
    =
    \exp(i\theta A|_{\lambda = \lambda_{0}})\,
    F_{\lambda}|_{\lambda = \lambda_{0}},
\end{equation}
where \(A=A(\lambda)\) is defined in \eqref{eq: Delaunay potential}. Since
there exist \(P_1,P_2\in \SU\) such that
\[
    P_{1}^{-1} A|_{\lambda = \lambda_{0}} P_{1}
    =
    \operatorname{diag}\left( -\frac{m}{2}, \frac{m}{2} \right),
    \qquad
    P_{2}^{-1} A|_{\lambda = i\lambda_{0}} P_{2}
    =
    \operatorname{diag}\left( -\frac{n}{2}, \frac{n}{2} \right),
\]
the map
\[
    \Phi_{\lambda}=(\phi_{\lambda},\psi_{\lambda})
    : \mathbb D\to \mathbb H^2\times \mathbb H^2
\]
defined in Corollary \ref{coro:formula} gives rise to
\begin{equation}\label{eq: Phi hat closing}
    \hat{\Phi}_{\lambda}|_{\lambda = \lambda_{0}}
    :=
    \left( \hat{\phi}_{\lambda}, \hat{\psi}_{\lambda} \right)
    \big|_{\lambda = \lambda_{0}}
    =
    \left(
    \operatorname{Ad}(P_{1}^{-1})\phi_{\lambda},
    \operatorname{Ad}(P_{2}^{-1})\psi_{\lambda}
    \right)
    \big|_{\lambda = \lambda_{0}} .
\end{equation}
Then
\begin{equation}\label{eq: gamma Phi}
    \gamma^{*}_{\theta} \hat{\Phi}_{\lambda}|_{\lambda = \lambda_{0}}
    =
    \left(
    \operatorname{Ad}
    \left(
    \operatorname{diag}(e^{-mi\theta/2}, e^{mi\theta/2})
    \right)
    \hat{\phi}_{\lambda}|_{\lambda = \lambda_{0}},
    \operatorname{Ad}
    \left(
    \operatorname{diag}(e^{-ni\theta/2}, e^{ni\theta/2})
    \right)
    \hat{\psi}_{\lambda}|_{\lambda = \lambda_{0}}
    \right).
\end{equation}
Represent
\[
   \hat{\phi}_{\lambda_{0}}
   =
   \begin{pmatrix}
      ix_{1} & x_{2} + ix_{3} \\
      x_{2} - ix_{3} & -ix_{1}
   \end{pmatrix},
   \qquad
   \hat{\psi}_{\lambda_{0}}
   =
   \begin{pmatrix}
      iy_{1} & y_{2} + iy_{3} \\
      y_{2} - iy_{3} & -iy_{1}
   \end{pmatrix}
\]
as points of
\(\mathbb H^2\subset \mathbb R^3_1\cong \mathfrak{su}(1,1)\), and project
them onto the Poincar\'e disk \(\mathbb D^2\subset \mathbb C\) by
\begin{equation}\label{eq: w1w2}
    w_{1}:=\frac{x_{2}+ix_{3}}{1+x_{1}},
    \qquad
    w_{2}:=\frac{y_{2}+iy_{3}}{1+y_{1}}.
\end{equation}
By \eqref{eq: gamma Phi} and \eqref{eq: w1w2}, we obtain
\[
    \gamma^{*}_{\theta}w_{1}=e^{-mi\theta}w_{1},
    \qquad
    \gamma^{*}_{\theta}w_{2}=e^{-ni\theta}w_{2}.
\]
Thus the \(y\)-translation gives rotations of the profile curves
\(w_1|_{y=0}\) and \(w_2|_{y=0}\). Hence the two projections of the resulting surface to the $\mathbb{H}^{2}$-factors are rotationally symmetric.

It remains to prove the completeness of the induced metric. By
\eqref{eq: metric}, Corollary 5.8 in \cite{BRS2010}, and the generalized
Lawson correspondence \cite{P1990,AA1998}, the metric is
\red{
\begin{equation}\label{eq: catenoid metric}
  ds^{2}_{M}
  =
  2e^{u}\, dz\,d\bar z
  =
  \frac12
  \left(v(x)^{2}+16|ab|^{2}\,v(x)^{-2}\right)dz\,d\bar z .
\end{equation} }
The closing condition makes the surface descend to
\[
    \Sigma/(z\sim z+2\pi i)
    =
    (-\kappa_1^2,\kappa_2^2)\times \mathbb R/2\pi\mathbb Z,
\]
which is an annulus. Its two ends correspond to
\[
    x\to -\kappa_1^2,
    \qquad
    x\to \kappa_2^2 .
\]

We show that these two annular ends are complete. Since \(a,b\ne 0\), the
coefficient of the metric in \eqref{eq: catenoid metric} is strictly positive.
Moreover, by the inequality \red{ \(s^{2}+16|ab|^{2}s^{-2}\ge \left(|s|+4|ab|\,|s|^{-1}\right)^{2}/2\) } (for instance), the length of any curve tending to an end is
bounded from below by a positive constant multiple of \red{
\[
    \int
    \sqrt{v(x)^2+16|ab|^{2}\,v(x)^{-2}}\,|dx|
    \;\ge\;
    \mathrm{const}\cdot
    \int
    \left(|v(x)|+|v(x)|^{-1}\right)|dx|.
\] }
It is therefore enough to show that
\[
    \int
    \left(|v(x)|+|v(x)|^{-1}\right)|dx|
\]
diverges near each endpoint.

Recall that \(v\) is defined on the maximal interval
\(
    (-\kappa_1^2,\kappa_2^2)
\)
on which it is finite and non-zero, and satisfies
\begin{equation}\label{eq: v ODE}
    (v')^2
    =
    (v^2-4a^2)(v^2-4b^2)+4c^2v^2 .
\end{equation}
Let \(x_0\) be a finite endpoint of this maximal interval. If \(v(x)\) had a
finite non-zero limit as \(x\to x_0\), the ordinary differential equation
\eqref{eq: v ODE} would extend \(v\) beyond \(x_0\), contradicting the
maximality of the interval. Thus, at a finite endpoint, either \(v(x)\to 0\)
or \(|v(x)|\to\infty\).

\smallskip
\textrm{Case 1: \(v(x)\to 0\).}
 Then \eqref{eq: v ODE} gives
\[
    (v')^2\to 16a^2b^2>0.
\]
In particular, \(v'\) is bounded near \(x_0\). Extending \(v\) continuously
by \(v(x_0)=0\), the mean value theorem gives
\[
    |v(x)|\le C|x-x_0|
\]
for \(x\) sufficiently close to \(x_0\). Hence
\[
    \int |v(x)|^{-1}\,dx
    \ge
    C^{-1}\int \frac{dx}{|x-x_0|}
    =
    \infty .
\]

\smallskip
\textrm{Case 2: \(|v(x)|\to\infty\).}
Expanding \eqref{eq: v ODE}, the dominant term as \(|v|\to\infty\) is
\(v^{4}\), so
\[
    (v')^{2}
    = v^{4} - 4(a^{2}+b^{2}-c^{2})\,v^{2} + 16a^{2}b^{2}
    \;\le\; C^{2}\,v^{4}
\]
for some positive constant \(C\) and all \(|v|\) sufficiently large; hence
\(|v'|\le C|v|^{2}\). Restricting to a sufficiently small end-neighborhood on
which \(|v|\) is monotone, we have
\[
    d|v|\le |v'|\,|dx|\le C|v|^{2}\,|dx|,
\]
so
\(
    |dx|\ge C^{-1}|v|^{-2}\,d|v|,
\)
and therefore
\[
    \int |v(x)|\,|dx|
    \;\ge\;
    C^{-1}\int^{\infty}\frac{d|v|}{|v|}
    \;=\;\infty .
\]

\smallskip 
\textrm{Case 3: an endpoint at infinity.}
If one endpoint of the interval is at infinity, then by AM--GM, we have \red{ 
\begin{equation*}
    v(x)^{2}+16|ab|^{2}\,v(x)^{-2}
    \;\ge\; 8|ab|>0,
\end{equation*} }
and hence the length to the endpoint is again infinite.

In all three cases the length to each endpoint diverges. Therefore both
annular ends are complete, and the surface is a complete catenoid-type annulus
with two complete annular ends.
\end{proof}

\begin{Remark}
We give an example of Theorem~\ref{prp:closing}. Let $n = 8, m = 4, \lambda_{0} = \sqrt{3}/2 + i/2, a = 1, b = 6, c=-\sqrt{47}$.  Then the solution $v(x)$ has the form \eqref{eq: solution v}, where $Z_{1} = -20 - 4i \sqrt{11},~ Z_{2} = -20 + 4i \sqrt{11}$. When $x \in (-\kappa^{2}_{1}, \kappa_{2}^{2})$, $y = 0.01$,  the profile curves of $\hat{\phi}_{\lambda_{0}}$ and $\hat{\psi}_{\lambda_{0}}$ projected onto $\mathbb{D}^{2}$ are shown in Figure \ref{figure2}. 
\end{Remark}

\begin{figure}[htbp]
\centering

\begin{subfigure}{0.4\textwidth}
\centering
\includegraphics[width=0.9\linewidth]{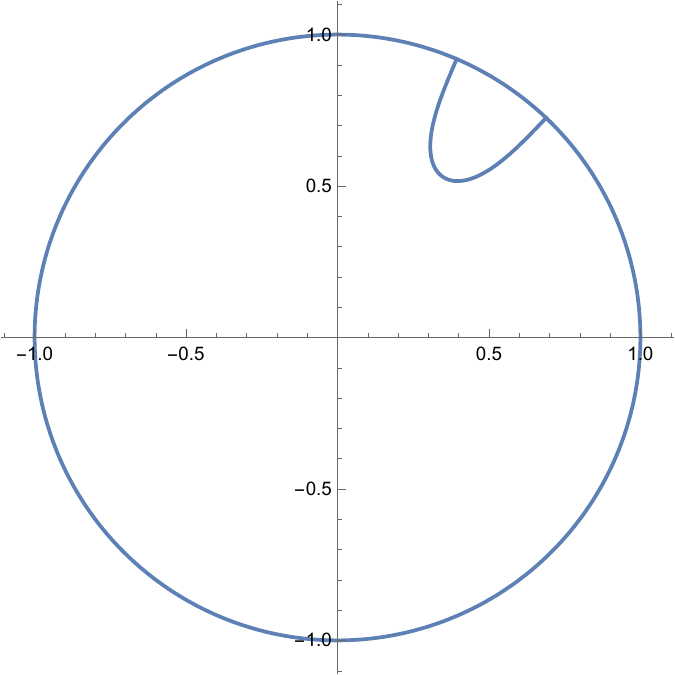}
\caption{The projection of the profile curve  of $\hat{\phi}_{\lambda_{0}}$ onto $\mathbb{D}^{2}$}
\end{subfigure}
\hfill
\begin{subfigure}{0.4\textwidth}
\centering
\includegraphics[width=0.9\linewidth]{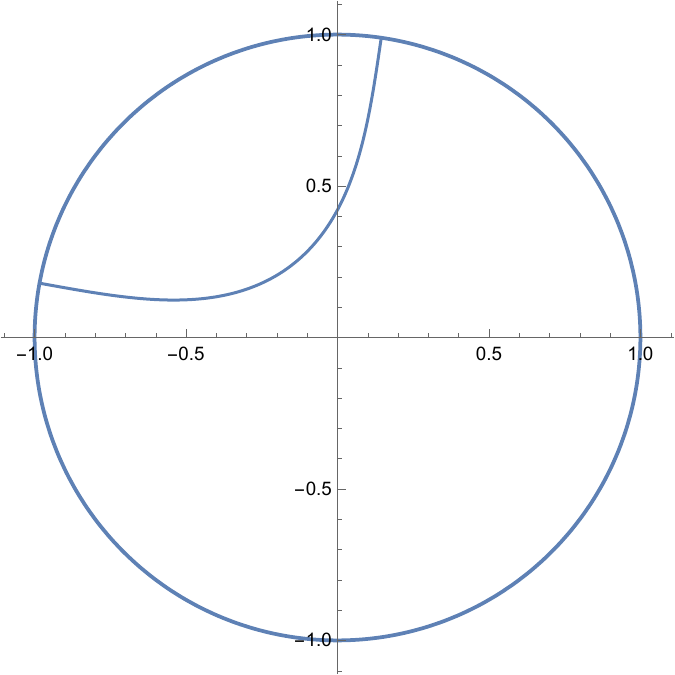}
\caption{The projection of the  profile curve of $\hat{\psi}_{\lambda_{0}}$ onto $\mathbb{D}^{2}$}
\end{subfigure}

\caption{For $x \in (-\kappa^{2}_{1}, \kappa_{2}^{2}),~ y = 0.01$, the projections of the profile 
curves of $\hat{\phi}_{\lambda_{0}}$ and $\hat{\psi}_{\lambda_{0}}$ onto $\mathbb{D}^{2}$.}\label{figure2}
\end{figure}

\subsubsection{Radially symmetric minimal Lagrangian surfaces}
\begin{Definition}[Radially symmetric potentials, \cite{BRS2010}]
Define 
\begin{equation}\label{eq: Smyth potential}
    \xi = \lambda^{-1} \begin{pmatrix}
 0 & 1 \\
 cz^{k} & 0
\end{pmatrix} \text{d} z, 
\end{equation}
for $z \in \Sigma = \mathbb{C}$ and $ k \in \mathbb N$ and some $c \in \mathbb{C} \setminus (\mathbb{S}^{1} \cup \{ 0 \} ) $. 
Here we call such potentials the \textit{radially symmetric potentials}. 
\end{Definition}

Let $R_{\ell}(z) = e^{2\pi i \ell/(k+2)} \bar{z}$ be the reflections 
of the domain $\mathbb{C}$, for $\ell \in \{ 0 , 1, \dots, k+1 \}$. Note that
\begin{equation}\label{eq:symxi}
    \xi (R_{\ell}(z), \lambda) = A_{\ell} \xi(\bar{z} , \lambda) A_{\ell}^{-1}, \quad \text{with} \quad  A_{\ell} = \begin{pmatrix}
 e^{\frac{\pi i \ell}{k+2}} & 0 \\
 0 & e^{\frac{-\pi i \ell}{k+2}}
\end{pmatrix} \in \SU
\end{equation}
holds.
Let $\Phi$ be the solution of $d \Phi = \Phi \xi$ with $\Phi(z_0) = \id$ and 
consider the Iwasawa decomposition $\Phi = F_{\lambda} B$. For $c \in \mathbb{C} \setminus (\mathbb{S}^{1} \cup \{ 0 \} )$, the Iwasawa decomposition of $\Phi$ cannot be carried out explicitly, and hence an explicit description of all radially symmetric surfaces cannot be obtained. By \eqref{eq:symxi}, we have
\begin{equation*}
    F(R_{\ell}(z),\overline{R_{\ell}(z)}, \lambda) = A_{\ell} F(\bar{z}, z , \lambda) A_{\ell}^{-1}.
\end{equation*}
We obtain the following proposition.
\begin{Proposition}
    Let $\xi$ be the radially symmetric potential defined in \eqref{eq: Smyth potential} and let $F_{\lambda} \in \Lambda\SU_{\sigma}$ for $\lambda \in \mathbb{S}^{1}$ be the extended frame obtained by $\xi$. The minimal Lagrangian surface $f^{\lambda}: M \to Q_{2}^{*}$ constructed by $( F_{\lambda}, F_{i\lambda} )$ admits discrete rotational symmetries:
    \begin{equation*}
    \hat{\mathfrak f}^{\lambda}( R_{\ell}(z), \overline{R_{\ell}(z)}, \lambda )  = \mathcal{A}_{\ell} \hat{\mathfrak f}^{\lambda}(\bar{z} , z , \lambda), \quad
    \mbox{with} \quad \mathcal{A}_{\ell} :=\begin{pmatrix}
 1 & 0 & 0 & 0 \\
 0 & 1 & 0 & 0 \\
 0 & 0 & \cos \left( \frac{2\pi \ell}{k + 2} \right) & \sin \left( \frac{2\pi \ell}{k + 2} \right) \\
 0 & 0 & -\sin \left( \frac{2\pi \ell}{k + 2} \right) & \cos \left( \frac{2\pi \ell}{k + 2} \right)
\end{pmatrix},
\end{equation*}
 where $\hat{\mathfrak f}^{\lambda}$ denotes the horizontal lift of $f^{\lambda}$.
 Moreover, the induced metric of $f^{\lambda}$ depends only on the radial coordinate $|z|$. 
Such a surface $f^{\lambda}$ is therefore called {\rm radially symmetric}.
\end{Proposition}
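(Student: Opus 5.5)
The plan is to transport the symmetry \eqref{eq:symxi} of the potential through each step of the DPW construction, and then read off its action on the horizontal lift and on the metric.

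\emph{Symmetry of the holomorphic data.} Set $\tilde\Phi(z) := A_\ell \Phi(\bar R_\ell(z)) A_\ell^{-1}$, where $\bar R_\ell$ denotes the holomorphic rotation $z\mapsto e^{2\pi i\ell/(k+2)}z$. Because the potential is holomorphic in $z$, \eqref{eq:symxi} says that $\xi$ pulled back by this rotation equals $A_\ell\,\xi\,A_\ell^{-1}$. Hence $\tilde\Phi$ solves $d\tilde\Phi = \tilde\Phi\,\xi$. We take the base point $z_0 = 0$, which is fixed by the rotation. Then $\tilde\Phi(0)=\id$, and uniqueness of the initial value problem gives $\Phi\circ \bar R_\ell = A_\ell \Phi A_\ell^{-1}$.

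\emph{Passing to the extended frame.} The matrix $A_\ell$ is diagonal and lies in $\SU$, and it is $\lambda$-independent. It therefore belongs to $\Lambda\SU_\sigma$, and it also lies in $\Lambda^+\SL_\sigma$ with trivial constant term up to the standard normalization. Conjugating the Iwasawa decomposition $\Phi = F_\lambda B$ by $A_\ell$ gives
\[
A_\ell\Phi A_\ell^{-1} = (A_\ell F_\lambda A_\ell^{-1})(A_\ell B A_\ell^{-1}).
\]
The first factor is again in $\Lambda\SU_\sigma$ and the second in $\Lambda^+\SL_\sigma$. Uniqueness of the Iwasawa splitting, with the usual normalization of $B$ at $\lambda=0$ that is preserved by diagonal conjugation, then yields the frame identity stated before the proposition. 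The same identity holds with $\lambda$ replaced by $i\lambda$, since $A_\ell$ does not depend on $\lambda$. Composing with the conjugation $z\leftrightarrow\bar z$ used in the paper's notation gives the identity in the stated form $F(R_\ell z,\dots)=A_\ell F(\bar z,z)A_\ell^{-1}$.

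\emph{Transfer to $Q_2^*$.} Consider the pair $(F_\lambda, F_{i\lambda})$ under the homomorphism $\psi$ of \eqref{eq:loopisom}, or equivalently the matrices $X^\lambda$ and $Y^\lambda$ of Proposition \ref{Prop}. We have
\[
X^\lambda \mapsto A_\ell X^\lambda A_\ell^{-1}, \qquad Y^\lambda \mapsto A_\ell Y^\lambda A_\ell^{-1},
\]
since $A_\ell$ commutes with $\sigma_3$. The $(1,1)$ entries are unchanged. The $(2,1)$ entries are multiplied by $e^{-2\pi i\ell/(k+2)}$. Writing this factor out in terms of real and imaginary parts, it acts on the last two coordinates in Proposition \ref{Prop} by the rotation block of $\mathcal A_\ell$, while the first two coordinates are left fixed. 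This gives the claimed relation for $\hat{\mathfrak f}^\lambda$. The right-multiplication by $A_\ell^{-1}$ cancels inside $X^\lambda$ and $Y^\lambda$, so no gauge ambiguity remains. If the sign of the rotation comes out reversed, I would replace $\ell$ by $-\ell$, which is still in the allowed index set modulo $k+2$.

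\emph{The metric.} By the transformation rule just established, $e^{u}$ is invariant under every $R_\ell$ and under conjugation. This only gives discrete symmetry, so a further argument is needed for dependence on $|z|$ alone. I would use the continuous $\mathbb S^1$-scaling instead: for $z\mapsto e^{i t}z$ one has $\xi(e^{it}z,\lambda)=D_t\,\xi(z,e^{-i\frac{k+2}{2}t}\lambda)\,D_t^{-1}$ for a suitable diagonal $D_t\in\SU$. Consequently $F$ at the rotated point is a conjugate of $F$ at a rotated spectral parameter. By Theorem \ref{thm2}, changing the spectral parameter does not change the metric, so $e^u$ is invariant under all rotations and depends only on $|z|$.

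I expect the main obstacle to be bookkeeping rather than any conceptual step. The points that need care are the normalization of the Iwasawa factor, the choice of base point at the origin, and checking that the $\mathbb Z_2$ ambiguity in \eqref{eq:loopisom} produces no sign discrepancy in $\mathcal A_\ell$.
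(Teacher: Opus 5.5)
Your argument for the discrete symmetry follows the paper's route. You pass from the symmetry of the potential to $\Phi$, then to the extended frame, and then observe that $(A_\ell,A_\ell)$ acts on the lift through $\operatorname{diag}(\id,\text{rotation})$. Reading this off from $X^\lambda,Y^\lambda$ of Proposition~\ref{Prop} is the same computation as the paper's $\psi(A_\ell,A_\ell)$ acting on the first two columns of $\hat{\mathcal F}$. You also supply the ODE-uniqueness and Iwasawa-uniqueness steps that the paper compresses into ``by \eqref{eq:symxi}''. You correctly make explicit that the base point must be $z_0=0$, which the paper leaves implicit.

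There is one slip. With your definition $\tilde\Phi:=A_\ell\Phi(\bar R_\ell z)A_\ell^{-1}$ one gets $d\tilde\Phi=A_\ell\Phi(\bar R_\ell z)A_\ell\,\xi\,A_\ell^{-2}\neq\tilde\Phi\xi$. The map that actually solves $d\tilde\Phi=\tilde\Phi\xi$ is $A_\ell^{-1}\Phi(\bar R_\ell z)A_\ell$, and it is this one that yields the identity $\Phi\circ\bar R_\ell=A_\ell\Phi A_\ell^{-1}$ you state. With that identity the sign comes out exactly as claimed. Put $\vartheta=2\pi\ell/(k+2)$. Then $X^\lambda_{21}$ and $Y^\lambda_{21}$ are multiplied by $e^{-i\vartheta}$, which acts on $(\Re,\Im)$ by $\left(\begin{smallmatrix}\cos\vartheta & \sin\vartheta\\ -\sin\vartheta & \cos\vartheta\end{smallmatrix}\right)$, the block of $\mathcal A_\ell$. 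So your fallback $\ell\mapsto-\ell$ is unnecessary, and it would not be a legitimate repair anyway, since it also changes $R_\ell$.

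For the radial dependence of the metric you take a genuinely different route. The paper cites \cite[Proposition 6.2]{BRS2010} for the radiality of $\hat u$ and then uses \eqref{eq: metric}, implicitly also using $|\hat\alpha|\propto|z|^k$. You instead use the continuous symmetry under $\rho_t(z)=e^{it}z$, namely $\rho_t^*\xi(\cdot,\lambda)=D_t\,\xi(\cdot,\nu_t\lambda)\,D_t^{-1}$. This checks out with $D_t=\operatorname{diag}(e^{-ikt/4},e^{ikt/4})$ and $\nu_t=e^{-i(k+2)t/2}$.

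The same two uniqueness arguments then apply, because rotating $\lambda$ preserves $\Lambda\SU_\sigma$, $\Lambda^+\SL_\sigma$ and the normalization at $\lambda=0$. They give $f^\lambda\circ\rho_t=\psi(D_t,D_t)\,f^{\nu_t\lambda}$. Hence the pullback by $\rho_t$ of the metric of $f^\lambda$ is the metric of $f^{\nu_t\lambda}$, which is $\lambda$-independent.

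Your version is self-contained and handles $\hat u$ and $|\hat\alpha|$ at once. It even recovers the discrete symmetry at $t=\vartheta$, where $\nu_t=(-1)^\ell$ is absorbed by the twisting. The price is that you must check the local Iwasawa domain around $0$ is rotation invariant. This holds because the symmetry preserves the big cell. The paper outsources exactly this fact to BRS in one line.

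Your aside that $e^u$ is invariant ``under conjugation'' does not follow from the discrete identity alone, but it is not used.
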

\begin{proof}
    Let $\psi : \Lambda \SU_{\sigma} \times \Lambda \SU_{\sigma} \to \Lambda \SO_{\sigma}$ be a loop group homomorphism, and 
    set $\hat{\mathcal F}(z, \bar z, \lambda) = \psi (F(z, \bar z, \lambda), F(z, \bar z,  i \lambda))$.
    By direct computation, we have
\begin{equation*}
      \hat{\mathcal F}( R_{\ell}(z), \overline{R_{\ell}(z)}, \lambda ) = \psi \left(  A_{\ell} , A_{\ell}  \right) \cdot \hat{\mathcal F}(\bar{z} , z , \lambda) \cdot  \left( \psi  \left( A_{\ell} , A_{\ell} \right) \right)^{-1},
\end{equation*}
where 
\begin{equation*}
    \psi\left( A_{\ell}, A_{\ell} \right) = \begin{pmatrix}
 \mathrm{Id}_{2\times2} & 0 \\
 0 & A
\end{pmatrix}, \quad A = \begin{pmatrix}
 \cos \left( \frac{2\pi \ell}{k + 2} \right) & \sin \left( \frac{2\pi \ell}{k + 2} \right) \\
 -\sin \left( \frac{2\pi \ell}{k + 2} \right) & \cos \left( \frac{2\pi \ell}{k + 2} \right)
\end{pmatrix} \in \mathrm{SO}(2).
\end{equation*}
 Then a straightforward computation shows the first claim. 

As shown in \cite[Proposition 6.2]{BRS2010}, the solution $\hat{u}$ of the elliptic sinh-Gordon
equation \eqref{eq: sinh-Gordon equation} for a surface generated by $\xi$ in \eqref{eq: Smyth potential}, with $\Phi(z_0) = \id$, depends only on $|z|$. Hence, by \eqref{eq: metric}, the solution $u$ of \eqref{eq: second order PDE} associated with the minimal Lagrangian surface $f^{\lambda}$ depends only on $|z|$. This completes the proof.
 \end{proof}

\appendix
\section{Comparison to the result of Gao-Van der Veken-Wijffels-Xu}\label{app: CU}

In this section, we shall discuss a relation to the result of Gao-Van der Veken-Wijffels-Xu \cite{GVWX} and establish a correspondence with their quantities. 

Let $\Phi = \left( \phi, \psi \right) : \Sigma \to \mathbb{H}^{2} \times \mathbb{H}^{2}$ be a Lagrangian immersion of an oriented surface with
area 2-form $\omega_{\Sigma}$. Define a function $\Gamma : \Sigma \to \mathbb{R}$ such that
\begin{equation*}
    \phi^{\ast}\omega_{0} = \psi^{\ast}\omega_{0} = \Gamma \omega_{\Sigma},
\end{equation*}
where $\omega_{0}$ is the K\"ahler 2-form defined in $\mathbb{H}^{2}$ \cite{GVWX}.
To compare $f$ and $\Phi$, we consider a local isothermal parameter $z = x + iy$ on $\Sigma$. Since $Q_2^{*}$ is isometric to $\mathbb H^2 \times \mathbb H^2$, where 
 two hyperbolic planes $\mathbb H^2$ have the constant curvature $-4$ \cite{WV2021}, we use the metric 
  \[
  ds_{\Sigma}^2 = 4 ds^{2}_{M} = 8e^{u}\mathrm{d}z\mathrm{d}\bar{z}.
 \]
The formulas for the minimal Lagrangian immersion $\Phi$ in $\mathbb{H}^{2} \times \mathbb{H}^{2}$ transform into 
\begin{align}
    &\Phi_{z\bar{z}} = 2e^{u}\Phi, \label{eq: GVWX 1} \\
    &16e^{2u}(1 - 4\Gamma^{2}) = \left|\langle \Phi_{z} , \hat{\Phi}_{z} \rangle\right|^{2} = 4\left| \left\langle \phi_{z} , \phi_{z} \right\rangle \right|^{2} = 4\left| \left\langle \psi_{z}, \psi_{z}  \right\rangle \right|^{2}, \label{eq: GVWX 4}
\end{align}
where the derivatives with respect to $z$ and $\bar{z}$ are given by $\partial_z = \tfrac12 (\partial_x - i \partial_y)$, $\partial_{\bar z} = \tfrac12 (\partial_x + i \partial_y)$, 
and $\hat{\Phi} = (\phi , -\psi)$. 
The equation \eqref{eq: GVWX 1} implies that $\phi_{z\bar{z}} = 2e^{u}\phi$ and $\psi_{z\bar{z}} = 2e^{u}\psi$. This means that $\phi, \psi : \Sigma \to \mathbb{H}^{2}$ are harmonic maps. Thus, the associated Hopf differential $\Theta(z)  := (1/2) \langle \Phi_{z}, \hat{\Phi}_{z} \rangle \otimes (\mathrm{d} z)^{2}$ is holomorphic. From \eqref{eq: GVWX 4}, we have the Codazzi equation
\begin{equation}
    |\Theta|^{2} = 4e^{2u} \left(1 - 4 \Gamma^{2}\right).
\end{equation}
 By Lemma 5.3 in \cite{GVWX} and the Gauss curvature $K = -\frac14 e^{-u}u_{z\bar{z}}$, we obtain the Gauss equation
 \begin{equation}\label{eq:generalGC1}
4|\Gamma_{z}|^{2}= \left(1- 4 \Gamma^2\right) \left(u_{z\bar{z}}  - 8e^{u} \Gamma^{2}\right). 
\end{equation}        
By setting $\hat \Gamma = \frac{1}{2}e^{-u} |\beta|$ and a straightforward computation, it follows that \eqref{eq:generalGC1}  is equivalent to \eqref{eq: second order PDE}. Then it is natural to have the 
 correspondence of the associated Hopf differential $\Theta$ and the function $\Gamma$ 
with the quantities $\alpha, \beta$ and $u$ by
\begin{equation}\label{eq: CU alpha}
    \Theta = 2\alpha, \quad     \Gamma  = \hat \Gamma = \frac{1}{2}e^{-u} |\beta|.
\end{equation}
 Indeed, a straightforward computation shows that \eqref{eq: CU alpha} holds for surfaces 
 $\Phi$ in $\mathbb H^2  \times \mathbb H^2$ and  $f$ in $Q_2^{*}$.
 
On behalf of all authors, the corresponding author states that there is no conflict of
interest. No new data were created or analyzed in this study.

\bibliographystyle{plain}
\bibliography{mybib}

\end{document}